\documentclass[12pt]{article}
\usepackage{amsmath,amsthm,amssymb} 
\usepackage{enumerate}
\usepackage{latexsym}
\usepackage{graphicx}
\usepackage[usenames,dvipsnames]{xcolor}
\usepackage[margin=1in]{geometry}
\usepackage{float}
\usepackage{bbm}
\usepackage{relsize}
\allowdisplaybreaks
\def\wt{\widetilde}

\parskip 1ex
\parindent 0in

\def\blue{\color{blue}}

\def\red{\color{red}}


\newcommand{\set}[1]{\left\{#1\right\}}
\setlength{\textwidth}{6.5in}
\setlength{\textheight}{8in}
\setlength{\topmargin}{0in}
\setlength{\oddsidemargin}{0in}

\def\cH{{\mathcal H}}

\def\cE{{\mathcal E}}
\def\cA{{\mathcal A}}

\def\whL{\widehat{L}}

\newcommand{\brac}[1]{\left( #1 \right)}
\newcommand\bfrac[2]{\left(\frac{#1}{#2}\right)}
\def\E{\mathbb{E}\,}

\def\Pr{\mathbb{P}}

\def\bZ{\mathbb{Z}}

\def\rai{\rightarrow \infty}

\newcommand{\ul}[1]{\boldsymbol{#1}}

\def\ooi{(1+o(1))}
\newcommand{\ind}[1]{\,\mathbbm{1}_{\{{#1}\}}}

\newcommand{\ignore}[1]{ }

\def\b{\beta}
\def\d{\delta}

\def\e{\varepsilon}
\def\f{\phi}

\def\g{\gamma}

\def\Th{\Theta}

\def\m{\mu}

\def\r{\rho}

\def\t{\tau}
\def\om{\omega}
\def\Om{\Omega}

\def\cB{{\mathcal B}}

\newtheorem{theorem}{Theorem}

\newtheorem{lemma}[theorem]{Lemma}

\newtheorem{proposition}[theorem]{Proposition}

\newcommand{\ra}{\rightarrow}
\newcommand{\raz}{\rightarrow 0}
\newcommand{\beq}[2]{\begin{equation}\label{#1}#2\end{equation}}

\def\sm{\! \setminus \!}

\newcounter{rot}

\def\seq{\subseteq}

\def\wh{\widehat}
\def\wt{\widetilde}
\def\whL{\wh L}
\def\wtL{\widetilde L}

\def\stL{L^*}

\newcommand{\rdup}[1]{ \lceil #1 \rceil}
\newcommand{\rdown}[1]{\lfloor #1 \rfloor}
\def\1{{\bf 1}}
\def\0{{\bf 0}}

\begin{document}
\title{ Diffusion limited aggregation in the layers model}

\author{Colin Cooper\thanks{
Research supported  at the University of Hamburg, by a  Mercator fellowship from DFG  Project 491453517}\\Department of Informatics\\
King's College\\
London WC2B 4BG\\England
\and Alan Frieze\thanks{Research supported in part by NSF Grant  DMS1952285}\\
Department of Mathematical Sciences\\
Carnegie Mellon University\\
Pittsburgh PA15213\\
U.S.A.}

\maketitle

\begin{abstract}
In the classical model of Diffusion Limited Aggregation (DLA), introduced by
Witten and Sander, the process begins with a single particle cluster placed
at the origin of a space. Then, one at a time, particles make a random
walk from infinity until they halt by colliding with the existing
cluster.

We consider an analogous version of this process on large but finite graphs
with a designated source and sink vertex. Initially the cluster of halted particles contains a
single particle at the sink vertex. Starting one at a time from the source, each
particle makes a random walk in the direction of the sink vertex.
The particle halts at the last unoccupied vertex  before the walk enters the cluster for the first time,
thus
increasing the size of the cluster. This continues until the source vertex becomes occupied,
at which point the process ends.

We study this DLA process on  several classes of layered graphs, including Cayley trees   of   branching factor  at least two with a sink
vertex attached to the leaves.  We
determine the finish time of the process for the given classes of graphs and show that the subcomponent of
the final cluster linking source to sink is essentially a unique path.
\end{abstract}

\section{Introduction}

\paragraph{Diffusion limited aggregation.}

In the classical model of Diffusion Limited Aggregation (DLA), introduced by Witten and Sander  \cite{WS}, \cite{WS1}, the process begins with a single particle
cluster placed at the origin of a space, and then, one-at-a-time,  particles make a random
walk “from infinity” until they collide with and stick to the existing cluster.
 The process is particularly natural in Euclidean space with particles making Brownian motion, or on the $d$--dimensional lattice $\bZ^d$.
Simulations of DLA in two dimensions show tree-like figures with long branches.  For $\bZ^d$, Kesten \cite{Kesten} proved  that when the cluster size is $N$, the length of these arms   is almost surely upper bounded by $CN^{2/3}$, when $d = 2$, and by $C_d N^{2/d}$ when $d\ge 3$.

\ignore{
Many similar  processes have been considered, the simplest, Reaction Limited Aggregation (RLA), differs from DLA in that  particles stick with probability less than one on each collision.
Indicative publications on RLA include  Ball et al \cite{Ball} or Meakin and Family \cite{Meakin}.
}

A distinct but  related process, Internal Diffusion Limited Aggregation (IDLA), was introduced by Diaconis
and Fulton \cite{Diaconis}, as a protocol for recursively building a random aggregate
of particles.
In IDLA
particles are added to the source vertex of an infinite graph, and make a random walk (over  occupied vertices) until they visit an unoccupied vertex at which point they halt. Thus the first particle occupies the source, and subsequent particles stick to the outside of the  component rooted at the source.
{Although the DLA and IDLA processes differ, the point in common is that they both describe the evolution of a unique cluster by adhesion to the cluster boundary.}
As with DLA, the focus in IDLA has been on the limiting shape of the component formed by the occupied vertices. The formative work by Lawler et al \cite{Lawler},  proved that, on $d$-dimensional lattices the limiting shape approaches a Euclidean ball; a result subsequently refined in \cite{Asselah},  \cite{Jerison} and  \cite{Lawler1}, amongst others.

DLA has been proposed as a model of physical processes in systems as diverse as coagulated aerosols \cite{WS}  and urban growth \cite{BLF}, a common factor being the dendritic shape of the cluster so obtained (see \cite{HAL1} for  illustrative examples).
However the main and an original motivation for the DLA process was as a model of dendritic growth in dielectric breakdown and lightning formation.  Niemeyer, Pietronero, and Weismann, \cite{NPW}, introduced a dielectric breakdown model which considers
DLA in the presence of an  electric field which biasses the particles to move in a given direction (e.g. downward).
In  established models of lightning formation,
 paths of negatively charged particles (leaders) descend downward from the
 base of the cloud layer, and paths of positively charged particles
 percolate upwards from the ground towards them, inducing a lightning strike  on meeting.
The DLA process was seen as a first approximation of this  process, which is itself finite in extent with a source at the top (cloud layer) and a sink (the earth) at the bottom.


We consider a version of the DLA process on large but finite graphs with a
designated source and sink vertex.
DLA on finite graphs
was previously studied   for complete binary trees by Hastings and Halsey \cite{HH}, and for the Boolean lattice (hypercube) 
by Frieze and Pegden \cite{FP}. The current paper continues this analysis of DLA on finite layered graphs, of which the binary tree and hypercube are typical examples.

\ignore{
The process in \cite{FP}  evolves at discrete time steps $t=0,1,\dots$; each of which has an associated cluster $C_t$.  The initial cluster $C_0$ consists of  a  single occupied vertex $\0=(0,\dots,0)$, the sink vertex.  Let $\1=(1,\dots,1)$ be the source vertex of the particles. We regard the edges of $\cB$ as directed away from $\1$ towards $\0$. The cluster $C_t$ is produced from $C_{t-1}$ by choosing a random  walk $\r_t$ from the source vertex $\1$ to $\0$ on this directed graph. Let $v$ be the last vertex of the  initial segment of $\r_t$ which is disjoint from $C_{t-1}$, and set $C_t=C_{t-1}\cup \{v\}$.  The process terminates at the first time $t_f$ when $\1 \in C_{t_f}$.

{\red {\sc what about this?} We consider two versions of the process. {\em Sticky} and {\em Non-Sticky}. In the Sticky process the particle continues until it reaches a vertex with an occupied child (the Physicists model) and in the Non-Sticky process the particle only stops when it tries to occupy an already occupied vertex. The boundary model.
Are they approx the same in terms of finish time? for finite graphs I think so.
}
{\blue At this point we distinguish two related DLA processes. In the examples from the physics literature cited above the particle halts on reaching a vertex with a neighbour in the occupied cluster, thus adhering (sticking) to the cluster boundary, the {\em Sticky}  process. In the alternative {\em Non-Sticky} process studied in \cite{FP} the particle only halts when the next transition is to an occupied vertex, i.e., into the occupied cluster.}
}

\paragraph{The layers model.}
{
Let $G$ be a finite graph $G=(V,E)$, whose vertices can be partitioned into sets $S_0,S_1,\ldots,S_k,S_{k+1}$ to form a
{\em layered structure} in which all edges are 
between layer $S_i$ and $S_{i+1}$ ($i=0,1,...,k$). The sets $S_0,\,S_{k+1}$ are  of size one with $S_0=\{v\}$ and $S_{k+1}=\{z\}$. The vertex $v$ is the {\em source vertex} and the vertex $z$ is the {\em sink vertex}.
In certain cases the source and sink may be attached as extra ({\em artificial}) vertices to an existing graph $G$ to complete the layered structure.

Examples of layered graphs with a symmetric structure include the following: The Boolean lattice (hypercube)  $\cB=\{0,1\}^n$, with $v=\1$, $z=\0$. Two dimensional square grids  with  source the top left hand corner and sink the bottom right hand corner, thus inducing a diagonal orientation. Finite Cayley trees with source the root and an artificial sink. Layered multipartite graphs with attached source and sink.

Graphs in the layers model have a top (the source) and a bottom (the sink).
Particles are constrained to move  downwards.
As such they can be seen as simple models of
particles percolating downward through some porous medium.
}

\paragraph{DLA in the layers model.}

Initially at step $t=0$, only the sink vertex  $z$ is  occupied, and  the occupied component is $C_0=\{z\}$.
At  a given step $t$, let $\r_t$ be a random walk  on the underlying graph  starting at the source $v$
and moving forward  level  by level
 to the sink $z$.   A particle  placed on the source vertex (assumed unoccupied) follows the walk $\r_t$ until it encounters an occupied vertex.
Let the path followed by the walk $\r_t$ be $v=x_0x_1x_2,...,x_kx_{k+1}=z$, and let 
$x_{i}$, ($i \ge 1$) be the first occupied vertex on the walk.
The particle halts at  position  $x_{i-1}$  and permanently occupies that vertex.
The component $C_{t}$ is formed by
 adding the vertex $x_{i-1}$ and directed edge
$x_{i-1}x_i$ to the component $C_{t-1}$ thus extending the directed tree  formed by the occupied vertices and rooted the sink $z$.
 As the sink  is occupied from the start, any particle which reaches level $k$ automatically halts there.
The process ends at a  step $t_f$, {\em the finish time}, when the source vertex $v$ first becomes occupied by a halted particle. Thus $t_f$ is the final number of particles occupying the graph (not including the sink).

\paragraph{The models of this paper.}
Let $N_i=|S_i|$ be the size of the set $S_i$, the $i$--th layer of the graph $G$.
We regard all edges as directed from the source towards the sink.
We consider two models.
\begin{itemize}
\item The {\em  Cayley tree } $G(d,k)$ with branching factor $d$ and height $k$.
For convenience
we take the size of the last level to be $d^k=n$. Here $d\ge 2$ is fixed or tends to infinity with $n$
sufficiently slowly, so that $k=\log n/\log d$ also tends to infinity with $n$.
The source $v$ is the root vertex at level zero of the tree and the sink $z$ is
an artificial vertex connected to  all vertices in the final layer $S_k$ of the tree.
Excluding the sink,
 $G(d,k)$ is a $(d^{k+1}-1)/(d-1)$--vertex graph.

\item  The {\em multipartite layers model.} The sets $S_0, S_{k+1}$ have size one, the layers $S_i$ have size $N_i$. For $i=0,..,k$ there is a complete bipartite graph between $S_i$ and $S_{i+1}$. Here $k \ge 1$ is fixed or
    tends to infinity with $n$.

The {\em equal (multipartite) layers model} is a graph $M_E(n,k)$ in which
the sets of $S_i$  ($i =1,\ldots,k$) are the same size $N_i= n/k$,  where $N_i \rai$.
Excluding the source and sink,
 the equal layers model $G$ is an $n$--vertex graph. The parameter $k$ can
  either be a fixed integer $k \ge 1$, or a function of $n$. 
  In the extremes the graph has one layer, $S_1$, of $n$ vertices ($k=1$), or is a path ($k=n$).

The {\em growing layers model} is a graph $M_G(d,k)$ in which the sets $S_i$ grow geometrically in size
 with parameter $d$.
As  $|S_0|=1$, then $|S_i|=N_i=d^i$, and we take $d^k=n$. Here $d,k \rai$ with $n$. Excluding the sink,
  the growing layers model $G$ is a $(d^{k+1}-1)/(d-1)$--vertex graph.
\end{itemize}

Analysis of the DLA process is in terms of $n$, which is, up to a constant multiple,   the number of vertices in the graph. The  model is probabilistic and corrections arising from the exceptional events are estimated as a function of $n$, even if this is not always made explicit.
The parameter $k$  determines the number of levels in the graph, and $d$ the growth rate (if any) of the levels.
The value of $k$ or $d$  can be constant in some models or it can tend to infinity as a function of $n$ within some bounds. As $d^k=n$
in the growing layers model, they are related by $k=\log n/\log d$.

\paragraph{Notation.}
We say a sequence of events $\cE_n$, $n\geq 0$ occurs with high probability (w.h.p.) if $\Pr(\cE_n)=1-\e_n$ where $\e_n=o_n(1)$ for some function $o_n(1) \raz$ with $n$, and thus $\lim_{n\to\infty}\Pr(\cE_n)=1$.
 Typically we write  $o_n(1)=o(1)$ and similarly for other asymptotic notation such as $O(\cdot), \Th(\cdot), \Om(\cdot)$.
We use $A_n \sim B_n$ to denote $A_n=\ooi B_n$ and thus $\lim_{n \rai} A_n/B_n=1$.
We use $\om=\om_n$ in two ways; either to denote any quantity $\om_n$ which tends to infinity with $n$ but suitably slowly as required in the given proof context, or
as a fixed divergent quantity  whose  value is stated explicitly.
 The expression $ f(n) \ll g(n)$ indicates $f(n)/g(n)=o_n(1)$.
 The notation $f \rai$ indicates that $f=f(n)$ grows unboundedly with increasing $n$.
 All results claimed are for sufficiently large $n$.

\paragraph{Results for the multipartite layers model.}

At any step $t$, the occupied component $C_{t}$ is a tree  with edges directed downwards towards  the sink $z$.
At the finish time $t_f$  the source becomes occupied, and the component $C_{t_f}$ contains a directed   {\em connecting path}  from the source $v$ to the sink $z$, all of whose vertices contain halted particles.

On deletion of the sink vertex, the digraph $D_t=C_t \sm \{z\}$ consists of a directed forest  with components rooted at the occupied vertices of level $k$.
Let $v=u_0 u_1\cdots u_k u_{k+1}=z$ be  the path connecting source and sink at the end of the process.
In the multipartite layers model w.h.p.
  the tree component rooted at vertex $u_k$ containing the connecting path is precisely the  path $v=u_0 u_1\cdots u_k$. Thus this path grew back to the source without gaining any off-path neighbours due to other particles colliding with it.


\begin{theorem} \label{Th1}
Let $\om \rai$ slowly with $n$. 
The following results hold with high probability in the multipartite layers model.
\begin{enumerate}[(1)]
\item Equal layers model, $M_E(n,k)$. For $i=1,...,k$ let $N_i=n/k$.
Let
\[
T_f=  [ (k+1)!  (n/k)^k ]^{1/(k+1)} =  \g_k \,n^{k/(k+1)}
\]
where $\g_k>0$ constant, and $\g_k \ra 1/e$ as $k \rai$.

 Provided
 $2\le k \le \sqrt{\log n}/\log \log n$, the finish time $t_f$ satisfies
 $T_f/\om \le t_f \le \om T_f$.
 \item
Growing layers model, $M_G(d,k)$. For $i=0,...,k$, let $N_i=d^i$, where $d^k=n$ and thus $k = \log n/\log d$. Let
\[
T_f= \sqrt{k}\; d^{k+3/2-\sqrt{2k+2}}.
\]
\begin{enumerate}[(a)]
\item  Provided
 $d \rai,\; k \rai$ with $n$,  and  $k \le \log^2 d$,
  the finish time $t_f$ satisfies  $ T_f/\om \le t_f \le \om T_f$.


\item At $t_f$, the levels $i=1,\ldots, k-\rdup{\sqrt{2k+2}-1}$ contain a single occupied vertex, the vertex $u_i$ of  the connecting path.
\end{enumerate}

\item Let $v=u_0u_1\cdots u_k z$ be the path connecting source $v$ and sink $z$ at $t_f$,
and let $D_{t_f}=C_{t_f}\sm\{z\}$ be the occupied component with the sink deleted. In either model, with high probability, the vertices $\{u_1,\cdots ,u_k\}$ have in-degree one in $C_{t_f}$. Thus the tree rooted at $u_k$ in $D_{t_f}$ is exactly this path.
\end{enumerate}
\end{theorem}

\paragraph{Results for Cayley trees.}

For a Cayley tree $G=G(k,d)$, the connecting path is by definition the unique path from source to sink.
We give values for the finish time both for $d \rai$, and for $d$ finite.
By an indicative argument
Hastings and Halsey \cite{HH} derived an estimate of $\sqrt{2k}\,2^{k-\sqrt{2k}}$ for the expected finish time of DLA on the binary tree  $G(2,k)$ of height $k$.
We confirm their estimate is of the correct order of magnitude, and give  results  for $G(d,k)$ for any constant $d$.

\begin{theorem} \label{Th2}
With high probability the finish time $t_f$ of DLA on a  Cayley tree of  branching factor $d$ and height $k$
satisfies $T_f/\om \le t_f \le  \om T_f$, where $T_f$ is as given below, and where  $\om \rai$ slowly with $n$. 
\begin{enumerate}[(1)]
\item
If $d \rai,\; k \rai$, and $d \ll k$, then $T_f= \sqrt{k}\; d^{k+3/2-\sqrt{2k+2}}$.
\item
\label{d-tree}
If $d \ge 2$ constant, then $T_f=   \sqrt{k}\, d^{k-\sqrt{2k}}$.
\end{enumerate}
\end{theorem}

{
\paragraph{Comparison of occupancy.} How to compare the results of DLA on  different models with each other? One possibility is to define the {\em packing ratio} $\rho=N/|V|$ of a process as the number of particles $N$ at the finish divided by the number of vertices in the graph. Ignoring constants and lower order terms, $\r=n^{-1/(k+1)}$ for the equal layers model and $\r=n^{-\sqrt{2/k}}$
for the  growing layers model.

How can the packing ratio be used to compare results for DLA on infinite graphs such as $\bZ^d$? The sink in $\bZ^d$ is  the origin $\ul 0$, as this is where the initial particle is located.
At any fixed moment the longest arm of the figure formed by the occupied vertices defines a path back to the edge of the current bounding figure; which we take as the source (of particles entering from outside).

For the two-dimensional grid  it is a result of Kesten \cite{Kesten}, that,    when $N$ particles are added, the longest arm of the DLA figure is upper bounded by $C_2N^{2/3}$, for some constant $C_2$.
A circle of radius $N^{2/3}$ contains order $n=N^{4/3}$ vertices in $\bZ^2$, so
$N =\Th( n^{3/4})$.
Ignoring constants,  this gives $\r \le n^{-1/4}$  for $\bZ^2$, at any point in the (infinite) process.
For  $d \ge 3$, the longest arm length  in $\bZ^d$ is $C_d N^{2/d}$.  A figure of radius $R=N^{2/d}$ has order $n=R^d=N^2$ vertices, so $\r=n^{-1/2}$.
}

\paragraph{Road map of proofs.} The following is an outline  description and we often suppress minor details and qualifiers such as w.h.p.

The first step is to derive a solution to the recurrence  \eqref{Li}--\eqref{Lk} for the expected occupancy of the  levels in the multipartite layers model. Under certain assumptions, the solution is asymptotic to \eqref{muk}, and 
the actual occupancy is concentrated  around this value. This is particularly true in the equal layers model where
the  levels fill up in a more or less regular manner as given by \eqref{muk}, allowing us to estimate the finish time.

Things differ somewhat for the growing layers model. Although the higher levels fill in a regular manner, as given by \eqref{muk},
eventually we reach a level (with a well defined index $k-j^*$) where, in expectation, only a single occupancy occurs.  The expected waiting time for further occupancy of this level is much longer than the expected waiting time for a unique path of halted particles to grow backwards to the origin, thus terminating the process.
This is the content of Theorem \ref{Th1}.(2).(b). On the other hand Theorem \ref{Th1}.(3) (which requires a separate proof) says something  stronger. Namely that  the connecting path  grows back from level $k$ to the source as a unique path, and without gaining any off-path neighbours due to other particles colliding with it.

Because of the similarity in which the layers grow,
the results for the growing layers model tell us the likely behaviour of the Cayley tree model.
This allow us to construct  proofs for the finish time for Cayley trees. 

\section{Bounds on occupancy in the layers model}\label{Sec2}
Recall that $N_i$ is the size of layer $i$  for $i=0,1,...,k$, where the value of $N_i$ depends on the model in question.
For $t \ge 0$, let $L_i(t)$ be the number of  particles  halted in level $i$ at the end of step $t$. Thus $L_i(0)=0$ for all $i\le k$, and $t=\sum_{i=0}^k L_i(t)$. We refer to $L_i$ as the occupancy of level $i$.
Note that $L_0(t)=0$ for $t < t_f$, $L_k(t) \le t$, and generally $L_i(t) \le \min(t, N_i)$.

\paragraph{General formulation of layers occupancy.}

Let ${\cal H}(t)=(L_0(t),L_1(t),\ldots,L_k(t))$ be   the occupancy vector of the DLA process at step $t$. Then,
\begin{flalign}
\E (L_i(t+1) \mid \cH(t)) &=\; L_i(t) + \frac{L_{i+1}(t)}{N_{i+1}} \prod_{j=0}^i \brac{1-\frac{L_{j}(t)}{N_{j}}},\qquad i<k,
\label{Li}\\
\E (L_k(t+1) \mid \cH(t)) &=\; L_k(t) +  \prod_{j=0}^k \brac{1-\frac{L_{j}(t)}{N_{j}}}. \label{Lk}
\end{flalign}
Note that \eqref{Lk} follows from \eqref{Li} on defining $L_{k+1}(t)/N_{k+1}=1$ for all $t$, and  that if $L_0(t)=1$, the above recurrences give $L_i(t+1)=L_i(t)$.

The following proposition gives the solution to these recurrences under suitable conditions.
\begin{proposition}\label{Prop1}
For $t \ge  0$ let $\mu_k(t)=t$, and for $1 \le j \le k-1$, let
\begin{equation}\label{muk}
\mu_{k-j}(t)= \frac 1{N_k N_{k-1}\cdots N_{k-j+1}}\; \frac {t^{j+1}}{(j+1)!}.
\end{equation}

For $j \ge 0$, suppose there are steps $T_k <  \cdots<T_{k-\ell}<\cdots < T_{k-j}$, 
such that for all $\ell \le j$, at $T_{k-\ell}$    the value of $\mu_{k-\ell}(T_{k-\ell}) \rai$  sufficiently fast, and for all levels $i<k-\ell$ the value of $\mu_{i}(T_{k-\ell}) \raz$  sufficiently fast.
Then with high probability for all $\ell \le j$ and $T_{k-\ell}\le t \le t_f$ we have $L_{k-\ell}(t) \sim \mu_{k-\ell}(t)$ in   either of the multipartite layers  models.

The condition for the existence of $T_{k-j}$ is satisfied for $0 \le j \le k-1$ in the equal layers model  and for
$0 \le j \le \sqrt{2k+2}-2$ in the growing layers model.
\end{proposition}

The proposition describes a gap property, that when level $k-j$ starts to fill and become
concentrated, the levels with lower index $i=0,1,...,k-(j+1)$ remain empty.
The proof of Proposition \ref{Prop1} is inductive backwards from level $k$. 
The precise  growth of the levels, and the values of $j$ which make it work are to be determined.
The first steps  are common to the equal and growing layers models, and we include them together in this section.
For the equal layers model we complete the proof of Proposition \ref{Prop1} in Sections  \ref{StateL} and \ref{lbo}.
The analogous proof for the growing layers model is given   in Section \ref{Growing}.

\subsection{Upper bound on occupancy at step $t$}\label{secupper}

The underlying random walk $\r_t$ from  source $v$ to sink $z$ at step $t$ defines a path given by $v=u_0u_1\cdots u_ku_{k+1}=z$,  where $u_i$ is a random vertex in level $i$. Particle $t$ follows this walk until halting at a vertex $u_i$, where  $u_{i+1}$ is the first occupied vertex encountered by $\r_t$.

\paragraph{The upper-blocked process.}
Let $B_i(t)$ denote the occupied (blocked) vertices in level $i$ in the DLA process at the end of step $t$.
We  define  an {\em upper-blocked process} which we use to upper bound $L_i(t)$. This process gives rise to sets $\wh B_i(t)\supseteq B_i(t)$ and random variables $\whL_i(t)=|\wh B_i(t)|\geq L_i(t)$.
For {\em every} vertex $u_j$, $0 \le j \le k$ on the walk $\r_{t+1}$, if $u_{j+1}$ is occupied ($u_{j+1} \in \wh B_{j+1}(t)$) add a vertex to $\wh B_j(t)$ as follows. If $u_j \not \in \wh B_j(t)$ add $u_j$ to $\wh B_j(t+1)$. If $u_j  \in \wh B_j(t)$ add some other $u'_j \in S_j \sm \wh B_j(t)$ to $\wh B_j(t+1)$.
If a layer becomes full we continue with the layer above.
As we will prove, this contingency will not occur, as with high probability  the first layer to become full is the source.

If  particle $t+1$ halts at vertex $u_i$ in the DLA process, then either $u_i$ is added to  both  $B_i(t+1)$ and $\wh B_i(t+1)$, or $u_i$ is already a member of $\wh B_i(t)$. In either case $B_i(t) \subseteq \wh B_i(t)$ for all $i$ and $t \ge 0$.
It follows  that $L_i(t) \le \whL_i(t) \le t$, as $\r_{t+1}$ can add at most one vertex to $\wh B_i(t)$.    Moreover $\whL_k(t)=t$ deterministically (provided $t \le N_k$).
 As a consequence  the finish time $t_f(UB)$ of the upper blocked process satisfies $t_f(UB) \le t_f(DLA)$.

Let ${\wh{\cal H}}(t)=(\wh L_0(t),\wh L_1(t),\ldots,\wh L_k(t))$ be the occupancy vector of the upper-blocked process  at step $t$.
For $0 \le i \le k$, the expectation $\E\whL_i(t)$ satisfies the recurrence
\begin{flalign}
\E(\whL_i(t+1) \mid \wh{\cal H}(t))&=\; \whL_i(t) + \frac{\whL_{i+1}(t)}{N_{i+1}}\ind{\cE(t)},  \label{whLi}
\end{flalign}
where $\cE(t)$ is the event that $\whL_i(t)<N_i$ and that  $\whL_{i+1}(t) <N_{i+1}$ for $i<k$. As we only propose to analyse the process as long as no level is full, we assume $\ind{\cE(t)}=1$ forthwith.
Equation \eqref{whLi} follows because the upper blocked process increases the size of $\wh B_j(t)$ (if possible) whenever the walk $\r_t$  contains a vertex of $\wh B_{j+1}(t)$, this being true at all levels  $j=0,...,k$.

The evolution of ${\wh{\cal H}}(t)=(\wh L_0(t),\wh L_1(t),\ldots,\wh L_k(t))$ is Markovian, and for $t \le t_f$ we
henceforth assume for $i \ge 1$ that $\wh L_i(t) <N_i$ in our calculations.
at $t_f \le \om T_f$. If so, referring to \eqref{Li} and \eqref{Lk}, we have
\[
\E (L_i(t+1)\mid \cH(t))  \le L_i(t) +\frac{L_{i+1}(t)}{N_{i+1}}\le \whL_i(t) +\frac{\whL_{i+1}(t)}{N_{i+1}}=\E( \whL_{i+1}(t+1) \mid \wh\cH(t)).
\]

The next lemma gives w.h.p.  bounds for $\whL_i(t)$
when none of the layers $i=1,\ldots,k$ are full.
With high probability  the source is the only layer to become full in either the upper blocked and DLA process at or before $t_f$. When  $L_0(t)=1$ at $t=t_f$ the DLA process stops anyway.

The proofs of level occupancy are inductive backwards from level $k$. For a given level $k-j$ we identify two (not necessarily integer) times,
$t_1(k-j)$ and $t_{k-j}(\om)$, defined as follows.
For $j\ge 0$,  let $t_1(k-j)$ be the solution to $\mu_{k-j}(t)=1$.  
Thus as $\mu_k(t)=t$,  $t_1(k)=1$ and
\begin{equation}\label{t1-wun}
t_{1}(k-j)= [(j+1)! N_k N_{k-1}\cdots N_{k-j+1}]^{1/(j+1)}.
\end{equation}
Let $t_k(\om)=1$, and for $1\le j \le k-1$, let $t_{k-j}(\om)=(4\om^{3})^{1/(j+1)} t_1(k-j)$, so that
\begin{equation}\label{tom}
t_{k-j}(\om)= \brac{
(4\om^{3})[(j+1)! N_k N_{k-1}\cdots N_{k-j+1}]}^{1/(j+1)}.
\end{equation}
The variable $t_1(k-j)$ is used a reference point in many of our calculations, and  concentration of $\whL_{k-j}(t)$ follows for $t \ge t_{k-j}(\om)$.

\begin{lemma} \label{LHAT}  Let $\mu_{i}(t)$ be given by \eqref{muk}. Let  $\om=6 \log n$. Provided $\whL_i(t) \le N_i$, the following hold for $i=0,1,...,k$.
\begin{enumerate}[(1)]
\item   Deterministically $\whL_k(t)=t$, and for $j\ge 1$, if $j^2/t=o(1)$, then $\E\whL_{k-j}(t) \sim \mu_{k-j}(t)$.
\item  Suppose that $t_1(k-(j-1)) \ll t_1(k-j)$, and that $j^2/t=o(1)$.
If $t \ge t_{k-j}(\om)$  then  w.h.p. $\whL_{k-j}(t) =\mu_{k-j}(t)(1+ O(1/\om))$.
\end{enumerate}
\end{lemma}

{\bf Note.}
The fact that $t_1(k-(j-1)) \ll t_1(k-j)$ is to ensure that $\mu_{k-(j-1)}(t_1(k-j))$ is sufficiently large and thus $\whL_{k-(j-1)}(t_1(k-j))$ is concentrated as $\whL_{k-j}(t)$ grows, is a model dependent calculation given in Section \ref{StateL} and Section \ref{Growing} respectively for the equal and
  growing layers models.

\begin{proof}
As  $N_{k+1}=1$ and $\wh L_{k+1}(t)=1$, this implies that $\wh L_k(s)=s$ for $0\leq s\leq t$. Moreover at most one vertex can be added to $\whL_i(t-1)$ at step $t$, which implies $\whL_i(t) \le t$.

Iterating \eqref{whLi} backwards  for $0\leq s\leq t$, and using $\whL_i(0)=0$, gives
\begin{equation}\label{lambda}
\E\whL_i(t)= \frac 1{N_{i+1}}\; \sum_{s=0}^{t-1} \E \whL_{i+1}(s).
\end{equation}
We claim for $j \ge 0$ that
\begin{flalign}\label{whLk-j}
 \ind{t \ge j}\;\frac{(t-j)^{j+1}}{(j+1)!} \le (N_k N_{k-1}\cdots N_{k-j+1})\; \E \whL_{k-j}(t) \le   \frac{t^{j+1}}{(j+1)!}.
\end{flalign}
For given $t$, the induction is backwards on $k-j$ from $j=0$.
When $j=0$ \eqref{whLk-j} is true,
 so the first non-trivial case is $j=1$. From \eqref{lambda} we see that
\beq{6a}{
\E \whL_{k-1}(t)= \frac{1}{N_k} \sum_{s=0}^{t-1} s,
}
which illustrates how \eqref{whLk-j} arises from bounding this sum.

For the induction at step $i=k-(j+1)$,  let
\begin{equation}\label{M}
M_{j-1}=N_k N_{k-1}\cdots N_{k-j+1}.
\end{equation}
Multiply \eqref{lambda} by  $M_{j-1}$, and
insert the bounds on $M_{j-1} \E \whL_{k-j}(s)$ from \eqref{whLk-j} (with $i+1=k-j$) into this, to give
\begin{equation}\label{sumj+1}
\frac 1{N_{k-j}} \sum_{s=j}^{t-1} \ \frac{(s-j)^{j+1}}{(j+1)!}\le M_{j-1}\;\E \whL_{k-(j+1)}(t) \le  \frac 1{N_{k-j}} \sum_{{s=1}}^{t-1} \ \frac{s^{j+1}}{(j+1)!}\\
\end{equation}
By comparison of the sum with the related integral we have that
\begin{equation}\label{tum}
\frac{(t-1)^{m+1}}{m+1} \le 1^m+2^m+\cdots+(t-1)^m \le \frac{t^{m+1}}{m+1}.
\end{equation}
Use \eqref{tum} in \eqref{sumj+1} with $m=j+1$, 
giving
\[
\frac{\ind{t \ge j+1}}{N_{k-j}}\; \frac{(t-(j+1))^{j+2}}{(j+2)!}\le M_{j}\;\E \whL_{k-(j+1)}(t) \le  \frac 1{N_{k-j}}  \frac{t^{j+2}}{(j+2)!},\\
\]
 which completes the induction for \eqref{whLk-j}. Moreover,  provided $j^2/t=o(1)$,
\begin{equation}\label{ELhat}
\E \whL_{k-j}(t) =  \frac {1}{N_k N_{k-1}\cdots N_{k-j+1}} \frac{t^{j+1}}{(j+1)!}\brac{1 -O\brac{j^2/t}}
=\mu_{k-j}(t) \ooi.
\end{equation}
 This completes the proof of Lemma \ref{LHAT}.(1).
\end{proof}

We proceed to the proof of Lemma \ref{LHAT}.(2). The first thing  to check is that, for the values of $k$ given in Theorem \ref{Th1}, 
for $t \ge t_{k-j}(\om)$, $j^2/t=o(1)$,  allowing us to use Lemma \ref{LHAT}.(1).

\begin{lemma} \label{remark1}
Let $t_1(k-j)$ be the value of $t$ such that $\mu_{k-j}(t)=1$ as given by \eqref{t1-wun}.  The condition $j^2/t=o(1)$ is satisfied at $t_1(k-j)$ in the equal layers model provided $k=o(n^{1/5})$ and in the growing layers model provided  $d \rai$. This allows us to assume that for $t \ge t_{k-j}(\om)$, $\E \whL_{k-j}(t)\sim \mu_{k-j}(t)$ in subsequent calculations.
\end{lemma}
\begin{proof}
For $j \ge 1$, the value of $t_{k-j}(\om)$ given in \eqref{tom},
satisfies
\[
t_{k-j}(\om) \gg t_1(k-j)= ((j+1)!N_k\cdots N_{k-j+1})^{1/(j+1)}\ge M_{j-1}^{1/(j+1)},
\] see \eqref{M}.
The product $M_{j-1}$  is model specific,
having the values $M_{j-1}(E)=(n/k)^j$ (equal layers model) and $M_{j-1}(G)=d^{kj-j(j-1)/2}$ (growing layers model).

In the first case $M_{j-1}(E)^{1/(j+1)} \ge (n/k)^{1/2}$, and in the second $M_{j-1}(G)\ge d^{k/2}$, this minimum being achieved at $j=1$ or $j= k$. Checking $j^2/M_{j-1}^{1/(j+1)}$ we see that
 the condition $j^2/t=o(1)$ is satisfied at $t_1(k-j)$ in the equal layers model provided $k=o(n^{1/5})$ and in the growing layers model provided either $d \rai$ or $k \rai$.
\end{proof}


\subsection{Concentration of  $\whL_i(t)$ for sufficiently large $t$.}\label{HoConc}
 We now prove Lemma \ref{LHAT}.(2).
 \begin{lemma}\label{conc-whp}
Let $\mu_{k-j}(t)$ as be given by \eqref{muk}. Let $\om \ge 2 \log N_k+4 \log k$.  Let $t_k(\om)=1$, and for $1\le j \le k-1$, as given in \eqref{tom},
 let
\[
t_{k-j}(\om)= \brac{
(4\om^{3})[(j+1)! N_k N_{k-1}\cdots N_{k-j+1}]}^{1/(j+1)}.
\]
Let $k^*= \sqrt{\log n}/\log \log n$. If $k \le k^*$,
with probability $1-O(k^2\om N_k e^{-\om})$ it holds that  for  all $j \le k-1$ and all $t \ge t_{k-j}(\om)$  that $\whL_{k-j}(t) \in [\mu_{k-j}(t)(1-1/\om),\mu_{k-j}(t)(1+1/\om) ]$.
 \end{lemma}

 \begin{proof}
The proof is inductive and for level $k-j$ it depends on establishing the result for levels $k,k-1,\ldots,k-j+1$.
By definition, $\whL_k(t)=t=\mu_k(t)$. Let $t_k(\om)=1$, establishing Lemma \ref{LHAT}.3 for $j=0$.
For $j\ge 0$,  let $t_1(k-j)$ be the solution to $\mu_{k-j}(t)=1$ as given by \eqref{t1-wun}.

\ignore{
{\bf Note.}
The fact that $t_1(k-(j-1)) \ll t_1(k-j)$ ensuring that $\mu_{k-(j-1)}(t_1(k-j))$ is sufficiently large and thus $\whL_{k-(j-1)}(t_1(k-j))$ is concentrated, is a model dependent calculation given in Section \ref{StateL} and Section \ref{Growing} respectively for the equal and
  growing layers models.
The proof below assumes the truth of this.
}

For $j \ge 1$, 
\begin{equation}\label{CONC}
t_{k-j}(\om)=
(4\om^{3})^{1/(j+1)}[(j+1)! N_k N_{k-1}\cdots N_{k-j+1}]^{1/(j+1)}=(4\om^3)^{1/(j+1)}t_1(k-j),
\end{equation}
so that $\mu_{k-j}(t_{k-j}(\om))=4\om^3$ where $\om$ is still to be determined.

The random variable $\whL_{k-j}(t)$ is obtained from $\whL_{k-j}(t-1)$ by choosing a random  vertex $u \in S_{k-j}$  irrespective of the current occupancy of $u$, and  a random neighbour $w \in S_{k-j+1}$. If $w$ is occupied then $\whL_{k-j}(t+1)=\whL_{k-j}(t)+1$.
Thus, $\whL_{k-j}(t+1)=\whL_{k-j}(t)
+ Q_{k-j}(t)$, where $\Pr( Q_{k-j}(t)=1)=(\whL_{k-j+1}(t)/N_{k-j+1})$ independently of any previous outcomes.
In particular, $\E Q_{k-1}(t)=t/N_k$, and  by  equation \eqref{6a}, $\E \whL_{k-1}(t)=t(t-1)/(2N_k)\sim \mu_{k-1}(t)$.
Similarly $\E \whL_{k-j}(t)\sim \mu_{k-j}(t)$ again by the Proof of Lemma \ref{LHAT}.2.

For $j \ge 1$, and given $ t \ge t_{k-j}(\om)$ let $\cA_j(t)$
denote the event that $\wh L_{k-j}(t)\in [\m_{k-j}(t)(1- 1/\om), \m_{k-j}(t)(1+ 1/\om)]$. By Hoeffding's Inequality,
\begin{equation} \label{nEjt}
\Pr( \neg \cA_j(t))
\leq 2\exp\set{-\frac{\m_{k-j}(t)}{3\om^2}\brac{1-\d}}
\le2\exp\set{-\frac{\mu(t_{k-j}(\om))}{4\om^2}} \le 2e^{-\om},
\end{equation}
where  $\d=O(1/{\om}+j^2/t)$, includes the correction from \eqref{ELhat}.

Let $\cE_j(t)$ be the event that $\cA_j(s)$ holds for all $s \in [t_{k-j}(\om), t]$. For $j=0$, $\Pr(\cE_{0}(t))=1$, and for $j\ge 1$, given $\cE_{j-1}(t)$,
we have inductively that
\begin{equation} \label{sumup}
\Pr(\neg \cE_{j}(t)\mid \cE_{j-1}(t)) \le \sum_{s=t_{k-j}(\om)}^t \Pr(\neg \cA_j(s)).
\end{equation}
Adding over $i \le j$ we will be able to complete the induction,  via
\begin{equation}\label{sumupj}
\Pr(\neg\cE_j)\leq \Pr(\neg\cE_j\mid \cE_{j-1})+\Pr(\neg\cE_{j-1})\le\sum_{i\le j}  \sum_{t  \ge t_{k-i}(\om)}  \Pr(\neg\cA_i(t))\le c j \om N_k e^{-\om},
\end{equation}
provided we can establish the bound on the RHS, which we now do.
Here $c$ is some absolute constant.

As $\mu_{k-j}(t)$ in \eqref{muk} is monotone increasing in $t$ and by \eqref{nEjt} is bounded above by $2e^{-\om}=o(1)$,  we 
use the Euler-MacLaurin Theorem to
replace the summation over $t$ in \eqref{sumup} by an integral.
Thus,
\[
\sum_{t  \ge t_{k-j}(\om)}  \Pr(\neg\cA_j(t)) \le 2\sum_{t \ge t_{k-j}(\om)}\exp\set{-\frac{\mu(t_{k-j}(\om))}{4\om^2}} \le 3 \int_{t \ge t_0}
\exp\set{-\frac{t^{j+1}}{C}}\, dt,
\]
where, $t_0=t_{k-j}(\om)$,  $C=[4\om^2 (j+1)! N_k \cdots N_{k-j+1}]$, and from \eqref{CONC}, $t_{k-j}(\om)=(4\om^3 C)^{1/j+1}$.

Put $t^{j+1}/C=z^2/2$, so that $t=(Cz^2/2)^{1/(j+1)}$, and $dt/dz= (2/(j+1))(C/2)^{1/(j+1)}z^{2/(j+1)-1}$.
Let $z_0=z(t_0)$, then $z_0=\sqrt{2 \om}$, and as $j\ge 1$,
$z^{2/(j+1)-1}\le 1$. Finally  $(C/2)^{1/(j+1)} = O(\om^{3/2} N_k)$, giving
\[
\int_{t \ge t_0}
\exp\set{-\frac{t^{j+1}}{C}}\, dt \le \frac{2}{j+1}\bfrac{C}{2}^{1/(j+1)} \int_{z \ge z_0} e^{-z^2/2}\,dz
\le O(\om^{3/2} N_k)\; \frac{1}{\sqrt \om} e^{-\om} ,
\]
by using a standard bound on the tail of the Normal
distribution, $\Pr(Z \ge z) \le 1/(\sqrt{2\pi}z) e^{-z^2/2}$.

Lemma \ref{conc-whp}  follows for all $1\le j \le k$ by adding the RHS of \eqref{sumupj} over $j \le k$ and choosing
$\om=2 \log N_k+ 4 \log k \le 6 \log n$
\end{proof}

\ignore{
Choosing $\om=6 \log n$ satisfies Lemma \ref{LHAT}, Lemma \ref{conc-whp},
and the next lemma, which we need for the lower bound on the DLA occupancy.

\begin{lemma}\label{NotBig}
Let $\om=6\log n$, $1 \le j \le k-1$, and $t_{k-j}(\om)$ be given by \eqref{tom}. Let $\cE$ be the event that for some $j$ and some $t \le t_{k-j}(\om)$ it holds that
$\whL_{k-j}(t) \ge \om^4$. Then $\Pr(\cE)=o(1/n^2)$.
\end{lemma}
\begin{proof}
This follows by an application of a Chernoff bound. The value of $\mu_{k-j}(t)$ is monotone increasing in $t$
and takes value $ 4\om^3$ at $t^*=t_{k-j}(\om)$. For any $c>0$ constant
\[
\Pr(\whL_{k-j}(t^*) \ge \om^4) \le (4e/\om)^{\om}=O(n^{-c}).
\]
Thus choosing $c>4$, $\Pr(\cE)\le n^2 O(n^{-c})=o(1/n^2)$.
\end{proof}
}

\subsection{Lower bound on occupancy at step $t$}
So far we only have an upper bound on $\E L_j(t)$ given by $\E L_j(t) \le \E \whL_j(t)$. We construct a lower bound
$\E \wtL_j(t)$ and prove that for large enough $t$  these bounds converge thus giving the asymptotic value of $\E L_j(t)$.

For a given level $j$, a  lower bound  on $L_j(t)$ can be found as follows. Define a sub-process of DLA
which requires that the particle avoids  upper-blocked vertices.
Thus to reach level $j$, a particle must avoid choosing neighbours in $\wh B_1,...,\wh B_j$ at steps $0,...,j-1$ of its random walk.

Let $\stL_j(t)$ be a w.h.p. upper bound on $\whL_j(t)$. This will, for example, be obtained from Lemma \ref{conc-whp}. if $t \ge t_j(\om)$.
Referring to \eqref{Li}, \eqref{Lk},
let $\wtL_i(t)$ be obtained by replacing $L$ by $L^*$ in  the bracketed terms on the RHS.
This defines a lower bound  $\wtL_j(t)$, such that w.h.p. $\wtL_j(t)\le  L_j(t) \le \whL_j(t)$.
Let $\wt {\cal H}(t)=(\wtL_0(t),\wtL_1(t),\ldots,\wtL_k(t))$ be   the occupancy vector obtained from this lower bound,
then we have the following recurrences.
\begin{flalign}
\E (\wtL_i(t+1)\mid  \wt {\cal H}(t)) &=\;  \wtL_i(t) + \frac{ \wtL_{i+1}(t)}{N_{i+1}} \prod_{j=0}^i \brac{1-\frac{\stL_{j}(t)}{N_{j}}},\label{LLi}\\
\E (\wtL_k(t+1)\mid \wt {\cal H}(t))  &=\; \wtL_k(t) +  \prod_{j=0}^k \brac{1-\frac{\stL_{j}(t)}{N_{j}}}.\label{LLk}
\end{flalign}
These recurrences are solved in  Section \ref{lbo} for the equal layers model, and in Section \ref{Growing} for the growing layers model.

\paragraph{The gap property.} It is clear that $0 \le \wtL_{k-j}(t) \le L_{k-j}(t) \le \whL_{k-j}(t) \le t$, and that these values are monotone non-decreasing.
We choose times
\[
t^-(k-j) < t_1(k-j) <t^+(k-j)=t_{k-j}(\om)
\]
such that
\[
\mu_{k-j}(t^-) =1/\om^3, \quad \mu_{k-j}(t_1)=1, \quad \mu_{k-j}(t^+)=4\om^3.
\]
Below $t^-(k-j)$, $\whL_{k-j}(t)=0$ w.h.p. for all $j=1,...,k$. This follows from the definition of $k< \om$ and the Markov Inequality. Above $t^+(k-j)$ we have  that $\E \wt L_{k-j}(t) \sim  \E\whL_{k-j}(t)$ and thus $\E L_{k-j}(t) \sim \mu_{k-j}(t)$.  The main content of  Sections \ref{EqualL} and \ref{Growing} is to prove this via a {\em gap property} which implies that
\[
t^+(k-j)= t_{k-j}(\om)\; \ll \;t^-(k-(j+1))\; \ll \; t_1(k-(j+1)),
\]
so that w.h.p. concentration for  $L_{k-j}$ occurs while $L_{k-(j+1)}$ is still zero.

\section{Analysis of DLA in the equal layers model}\label{EqualL}

It is convenient at this point to obtain an asymptotic estimate for the finish time of DLA in the equal layers model.
Let $t_1(k-j)$ and $t_{k-j}(\om)$ be as given by \eqref{t1-wun} and \eqref{tom}. Thus $T_f=t_1(0)$, the time at which the source has expected occupancy one
(in the upper-blocked process).
Recalling that  $N_1=\cdots=N_k=n/k$, let
\begin{equation}\label{Tef}
T_f= \left [ (k+1)!  N_1\cdots N_k \right ]^{1/(k+1)}=\brac{\frac{(k+1)!}{k^k} n^k}^{1/(k+1)}.
\end{equation}
 That $T_f$ indeed approximates the finish time
will be shown in Section \ref{fini}.
\begin{lemma}\label{Tfval}
For $k \ge 2$, let $G$ be an equal layers graph with level sizes $N_i= n/k$, $i=1,...,k$.
\begin{enumerate}[(1)]
\item
Let $\g_k=e^{-1} (1+O(\log k/k))$, then
\begin{equation}\label{Tgk}
T_f=n^{k/(k+1)}\g_k.
\end{equation}
\item For $1\le j \le k$, let  be as given in \eqref{t1-wun}, then
\[
t_1(k-j)=\brac{(j+1)!}^{1/(j+1)}(n/k)^{j/(j+1)}= \Th(j)(n/k)^{j/(j+1)},
\]
and
\[
t_1(k-j)=t_1(k-(j-1)) \cdot \Th(1) (n/k)^{1/(j(j+1))}.
 \]
Let $k^*= \sqrt{\log n}/\log \log n$. If $k \le k^*$, then $t_1(k-j) \gg t_1(k-(j-1))$ in the equal layers model; as claimed in the note below Lemma \ref{LHAT}.
\end{enumerate}
\end{lemma}
\begin{proof}
 Note that
\[
\frac{(k+1)!}{k^k}=e^{O(1/k)}\frac{1}{k^k}\sqrt{2\pi}e^{-(k+1)}k^{k+3/2}(1+1/k)^{k+3/2}=e^{-(k+1)}\;\Th_k,
\]
where $\Th_k\sim(e^{1-1/k+O(1/k^2)}\sqrt{2\pi} k^{3/2})$. Thus
\[
(\Th_k)^{1/k+1}=e^{O( \log k/k)}=1+O(\log k/k)
\]
 is  bounded for $k \ge 2$ and tends to one  as $k \rai$.
From \eqref{Tef}
\begin{equation}\label{whf}
T_f=\brac{ \frac{(k+1)!}{k^k}{n}^{k}}^{1/(k+1)}
=n^{\frac k{k+1}}e^{-1}\; (1+O(\log k/k)).
\end{equation}
The second part follows by direct calculation using $\brac{(j+1)!}^{1/(j+1)}=j\Th(1+j^{3/(2j)})=j\Th(1)$.
\end{proof}
Note that with
$t_{k-j}(\om)$ as given by \eqref{tom}, then $t_{k-j}(\om)=(4\om)^{1/(j+1)}t_1(k-j)$,
so  asymptotics follow from the above lemma.

\subsection{Evolution of the state vector $\whL$ in the equal layers model.}\label{StateL}
We prove there  is a large gap in the number of steps between  the time when $\E \whL_{k-j}=1$ with all lower values zero, and the time when $\E \whL_{k-(j+1)}=1$. The gap  allows $\whL_{k-j}$ to increase and  become concentrated around $\mu_{k-j}$, whilst  all  values with a lower index $i<k-j$ remain zero. This
 confirms the inductive assumption stated below \eqref{t1-wun} in Lemma \ref{conc-whp}.

We list various  assumptions used in this section.
\begin{equation}\label{ometc}
 1 \le k \le k^*=\frac{\sqrt{\log n}}{\log \log n},\quad  \om  =6\log n, \quad \b= \frac{N_k}{\om T_f}
 \ge (6\log n)^{4+k/2}.
\end{equation}
Let $\whL=(\whL_0, \whL_1,\ldots, \whL_k)$ be the state vector of the upper-blocked  process. The entries in $\whL$ are non-negative integers, and if $\whL_i=0$, then $\whL_{i-1}=0$.

 The following  argument for $t \le \om T_f$ proves there is a large enough gap $t''-t$ between $\mu_{k-j}(t)=1$ and $\mu_{k-(j+1)}(t'')=1$ for $\whL_{k-j}(t'')$ to be concentrated, as assumed in Lemma \ref{conc-whp}. In particular w.h.p. at
 $t_{k-j}(\om)$, where $t''\gg t_{k-j}(\om) > t'$ we have \eqref{L2}.

Define $\b=\b_k=N_k/\om T_f$.
From \eqref{Tgk},
\begin{equation}\label{beta}
\b=\frac{N_k}{\om T_f}=\frac{n}{ \om k}\frac 1{\g_k n^{k/(k+1)}}=\frac{1}{\g_k} \frac 1{\om k}n^{1/(k+1)}.
\end{equation}
We assumed that $\b \ge (6\log n)^{4+k/2}$.
This is true if  $\om= 6 \log n$ since we assume that $k \leq \sqrt{\log n}/\log \log n$.

As $N_{k-j}=N_k=n/k$ in the equal layers model, for any $t \le \om T_f$,
\begin{align}\label{dogs}
\mu_{k-(j+1)}(t) &= \frac{t}{(j+2){ N_{k-j}}}\cdot \mu_{k-j}(t) \le \frac 1{(j+1)\b}\mu_{k-j}(t).
\end{align}
For $j+\ell \le k$, we can iterate this to give
\begin{align} \label{doggy}
\mu_{k-(j+\ell)}(t) &\le \mu_{k-j}(t)\frac 1{\b^\ell} \frac{1}{(j+\ell+1)(j+\ell)\cdots (j+2)}\le \mu_{k-j}(t)\frac 1{((j+1)\b )^\ell}.
\end{align}

Consider $\E\whL_{k-j}(t)$. At $t \sim t_1(k-j)$ when $\mu_{k-j}(t)\sim 1$, then   (see Lemma \ref{remark1}) $\E \whL_{k-j}(t) \sim 1$.
The Markov inequality implies that w.h.p. $\whL_{k-j}(t) \in I_\om=[0,1,...,\om]$. From \eqref{dogs}--\eqref{doggy},
\begin{equation}\label{dogs1}
 \mu_{k-(j+\ell)}(t) \le \frac {1+o(1)}{((j+1)\b)^\ell}, \qquad\text{for } 1\le \ell\leq k-j,
\end{equation}
and thus w.h.p. $\whL_0=0,\whL_1=0,\ldots,\whL_{k-(j+1)}=0$.
Using \eqref{dogs}--\eqref{doggy}, we see that
\begin{equation}\label{dogalog}
 \mu_{k-j+\ell}(t) \ge \mu_{k-j}(t) \; j(j-1)\cdots(j-\ell+1) \,\b^{\ell}\qquad \text{for } \ell\leq j.
\end{equation}
So if $\mu_{k-j}(t)\sim 1$, then  for $1\le \ell \leq j$,  $t\ge t_{k-j+\ell}(\om)$ and
\begin{equation}\label{6logn}
\mu_{k-j+\ell}(t) \ge \b^\ell \ge (6\log n)^{4+k/2}.
\end{equation}
Thus Lemma \ref{conc-whp} holds,
and  w.h.p. $\whL_{k-j+\ell}$ is equal to $(1+o(1))\E \wh L_{k-j+\ell}\sim\mu_{k-j+\ell}(t)$.

In summary, at  time $t$ such that $\mu_{k-j}(t) \sim 1$  (implying that $t\leq T_f$, see Proposition \ref{Tfval}.(2)), w.h.p., the state vector  $\whL$ is such that $\mu_{k-\ell}\rai$ for $\ell \le j-1$, and
\begin{equation} \label{L1}
\whL(t)=(0,\ldots,0,\;\whL_{k-j}\in I_\om,\; (1+o(1))\mu_{k-j+1},\; (1+o(1))\mu_{k-j+2},\ldots,\;\mu_k).
\end{equation}
Let $t=t_1(k-j)$,  let $t' =
t_{k-j}(\om)=(4\om^3)^{1/((j+1)} t_1(k-j)$, so $\mu_{k-j}(t') \sim 4\om^3$.  By  Lemma \ref{conc-whp} it holds w.h.p. that $\whL_{k-j}(t')\sim \mu_{k-j}(t')$.

Let $t''=t_1(k-(j+1))$. By Proposition \ref{Tfval}.(2),\; $t_1(k-(j+1))=\Th(1) (n/k)^{1/(j(j+1))}  t_1(k-j)$.
As $t'=(4\om^3)^{1/((j+1)}t_1(k-j)$, it can be checked that $t'' \gg t'$.
Also by \eqref{doggy}
\[
\sum_{\ell \ge 1} \mu_{k-(j+\ell)}(t')=O\bfrac{\mu_{k-j}(t')}{\b}=O\bfrac{\om^3}{\beta}=O\bfrac{1}{\log^{1+k/2}}.
\]
Thus, applying the Markov inequality to the above,  at $t'=t_{k-j}(\om)$, w.h.p.
\begin{equation} \label{L2}
\whL(t_{k-j}(\om))=(0,...,0,\;(1+o(1))\mu_{k-j},\;  (1+o(1))\mu_{k-j+1},...,\;\mu_k),
\end{equation}
so that $\whL_{k-j}(t')$ is concentrated and all lower levels are unoccupied, and thus  the claimed gap exists.
This condition persists  w.h.p. until around $t_1(k-(j+1))=t''\gg t'=t_{k-j}(\om)$, when $\mu_{k-(j+1)}(t_1(k-(j+1)))=1$
at which point $\whL(t'')$ resembles \eqref{L1} and the induction continues.

\subsection{Lower bound  on occupancy in the equal layers model.}\label{lbo}
We prove that, for $t \ge t_i(\om)$, we have $\E \wtL_i(t) \sim \E \whL_i(t) \sim \mu_i(t)$.
As with the upper bounds, we will have that  $\E \wtL_k \gg \E \wtL_{k-j}$ for $j \ge 1$. The first step is to
draw a line between them.

Let $t^-=t_1(k-1)/\om$, so that $\mu_{k-1}(t^-)=1/\om^2$. Then w.h.p $\whL_{k-1}(t^-)=0$ and as $\whL_{k-1}(t)$ is monotone non-decreasing,  w.h.p. $\whL_j(t)=0$ for all $j \le k-1$ and $t \le t^-$. As $\whL_k(t)=\stL_k(t)=t$ deterministically, this simplifies \eqref{LLk} for $t \le t^-$.

As before let $\b=N_k/\om T_f$ where $\om,k$ are given by \eqref{ometc}.
Provided $k \ge 2$, if $t \ge t^-$, then $t/\b \gg \om^3$. Indeed using \eqref{beta},
\[
t^-=\frac{t_1(k-1)}{\om} = \frac{1}{\om} \sqrt{\frac{2n}{k}} \qquad \implies \qquad \frac{t^-}{\b} = \frac{\g_k \om k}{n^{1/(k+1)}} \cdot \frac{1}{\om} \sqrt{\frac{2n}{k}} \ge 2\g_k n^{1/6} \gg \om^3.
\]

For $t \ge t^-$, and  $1\le i \le k-1$ we first  consider the product term in \eqref{LLi}.
Recalling that $N_i=N_k=n/k$,  we will prove that
\begin{equation}\label{small}
\prod_{j=0}^i \brac{1-\frac{\stL_{j}(t)}{N_{j}}} \ge 1- \sum_{j=0}^i \brac{\frac{\stL_{j}(t)}{N_{j}}}
=1-O\bfrac{t}{\b N_k}=1-o(1).
\end{equation}

By \eqref{doggy}, $\mu_{k-j} \le \mu_k/\b^j$.
Assume $t \ge t^-$ and that  for some $\ell \le k-1$, $t_{\ell}(\om) \le t < t_{\ell+1}(\om)$. Apply Lemma \ref{conc-whp} for $j\ge \ell+1$ 
along with \eqref{L1} and \eqref{L2}.

 If $i< \ell$ then $\stL_{j}(t)=0$, for $j \le i$.
 Next, if $i=\ell$, $L^*_j(t)=0$ for $j<i$ and $L^*_i(t)\le 2 \mu_i(t_i(\om))\le 8\om^3$. Finally  assume $i \ge \ell+1$. Then (as $t \ge t^-$),
\begin{equation}\label{bound}
\sum_{j=0}^i \frac{\stL_{j}(t)}{N_{j}}\le \frac{8\om^3}{N_k}+2\sum_{j=\ell+1}^{i} \frac{t}{\b^{k-j} N_k} \le \frac{O(1)}{N_k}\brac{\om^3+\frac{t}{\b}}=O\bfrac{t}{\b N_k}.
\end{equation}

Consider now $\E \wtL_k(t)$. For all $t \ge 0$,  $\whL_k(t)=t$, and $\sum_{j<k} \whL_j(t)=O(t/\b)$.
So from  \eqref{LLk}
\[
\E \wtL_k(t)=t -\sum_{ s=0}^t O\bfrac{s}{N_k}=t-O\bfrac{t^2}{N_k}=t\brac{1-O\bfrac{t}{N_k}}.
\]
For $t\ge t_0$ where $t_0 \rai$
arbitrarily slowly we have $\wtL_k(t) \sim t$ w.h.p., initializing an induction for $\E \wtL_i(t)$
using arguments equivalent  Section \ref{HoConc} for $\E\whL_i(t)$.

At step $t+1$, equation \eqref{small} implies that,  in the lower bounds on the process, particle  $t+1$ arrives at level $i$  with probability $(1-o(1))$. If $i<k$, it halts at this level with probability $\wtL_{i+1}(t)/N_{i+1}$.  Thus
\begin{equation}
\E\wtL_i(t+1) =\; \E\wtL_i(t) + \brac{1-O\bfrac{t}{\b N_k}}\cdot\frac{\E \wtL_{i+1}(t)}{N_{i+1}}. \label{wtLi}
\end{equation}
Arguing as in \eqref{lambda} on the inductive
assumption that
$\E \wtL_{i+1}(t)=\mu_{i+1}(t) (1-O(t/N_k))$, we find
\begin{flalign*}
 \E \wtL_i(t)=&\frac{1}{N_{i+1}} \sum_{s=0}^{t-1} \E \wtL_{i+1}(s) \brac{1-O\bfrac{s}{\b N_k}}\\
 =&\frac{1}{N_{i+1}} \sum_{s=0}^{t-1} \mu_{i+1}(s)\brac{1-O\bfrac{s}{N_k}} \brac{1-O\bfrac{s}{\b N_k}}\\
 =& \frac{1}{N_{i+1}} \sum_{s=0}^{t-1} \frac{s^i}{i! N_k \cdots N_{i+2}} \brac{1-O\bfrac{s}{N_k}}\\
=&\mu_i(t)-O(1)\frac{t \mu_{i}(t)}{N_k}\;\;
=\;\mu_i(t)\brac{1- O\bfrac{t}{N_k}}.
\end{flalign*}
Thus
\begin{equation}\label{Lit}
\E \wtL_i(t) \sim \E L_i(t) \sim \E  \whL_i(t)\sim \mu_i(t)= \frac{t^{i+1}}{(i+1)! N_k \cdots N_{i+1}}
\end{equation}
as required.

Let $t_i(\om)$ be given by \eqref{CONC}.
For those $i \le k$, such that $t \ge t_i(\om)$, then $\mu_i(t) \rai$  suitably fast
and the concentration results of Lemma \ref{conc-whp} hold. The gaps inherited from the upper bound argument of Section \ref{StateL} are essentially unaltered.

This completes the proof of Proposition \ref{Prop1} for the equal layers model.

\subsection{Finish time of DLA in the equal layers model.}\label{fini}
A lower bound on the finish time follows from the upper-blocked process, and an upper bound from the lower bound estimates for DLA.

\begin{proposition}\label{endDLA}
For $ 1 \le k \le k^*=\sqrt{\log n}/\log\log n$, let $G$ be an equal layers graph with level sizes $N_i= n/k$, $i=1,...,k$.  Let  $T_f$ be given by \eqref{Tef}.
With probability $1-O(1/\om')$, the finish time $t_f$ of the DLA process in $G$ satisfies $T_f/\om' \le t_f \le \om' T_f$, where $\om' \rai$ arbitrarily slowly.
\end{proposition}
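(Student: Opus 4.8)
The plan is to treat the two parts of the statement separately: part (1) is a Stirling estimate, and in part (2) the lower tail $t_f\ge T_f/\om$ will follow from a first-moment bound on $\whL_0$, while the upper tail $t_f\le\om T_f$ needs a hazard (survival-probability) argument driven by the occupancy of level $1$. For part (1): from $T_f=[(k+1)!(n/k)^k]^{1/(k+1)}$, Stirling gives $[(k+1)!]^{1/(k+1)}=\frac{k+1}{e}\exp\brac{\frac{\log(2\pi(k+1))+O(1/k)}{2(k+1)}}=\frac ke\,(1+O(\log k/k))$, absorbing $(k+1)/k=1+O(1/k)$; multiplying by $(n/k)^{k/(k+1)}=n^{k/(k+1)}k^{-k/(k+1)}$ and using $k\cdot k^{-k/(k+1)}=k^{1/(k+1)}=e^{(\log k)/(k+1)}=1+O(\log k/k)$ yields $T_f=e^{-1}n^{k/(k+1)}(1+O(\log k/k))$. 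For the lower tail: since $S_0$ is a single vertex, $t_f=\min\set{t:L_0(t)=1}$, so $\set{t_f\le t}=\set{L_0(t)\ge1}$; as $L_0(t)\le\whL_0(t)$ and, by \eqref{whLk-j} with $j=k$ together with \eqref{muk}, $\E\whL_0(t)\le\mu_0(t)=(t/T_f)^{k+1}$, Markov's inequality gives $\Pr(t_f\le T_f/\om)\le\E\whL_0(T_f/\om)\le\om^{-(k+1)}=o(1)$ for $k\ge2$, so $t_f\ge T_f/\om$ w.h.p.

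For the upper tail: while the process is alive, the next particle halts at the source precisely when the random $S_1$-neighbour it first selects is already occupied, an event of conditional probability $L_1(s)/N_1$ at step $s$. I would couple DLA with the ``immortal'' variant that simply discards any particle which would halt at the source; this leaves $L_1$ unchanged, makes $\set{t_f>t}$ measurable with respect to the immortal process up to time $t$, and makes $M_t=\mathbbm{1}\{t_f>t\}\big/\prod_{s=0}^{t-1}\brac{1-L_1(s)/N_1}$ a nonnegative martingale with $\E M_t=1$ (the denominator is positive on the w.h.p.\ event that level $1$ has not saturated before $\om T_f$, since $\E\whL_1(\om T_f)\le\mu_1(\om T_f)=o(N_1)$ when $k\le\sqrt{\log n/\log\log n}$). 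As $\prod_{s<t}(1-L_1(s)/N_1)\le\exp\brac{-\sum_{t'\le s<t}L_1(s)/N_1}$ with $t'=T_f/\om$, splitting on the w.h.p.\ event $\mathcal G=\set{L_1(s)\ge(1-1/\om)\mu_1(s)\ \forall\,s\in[t',\om T_f]}$ — valid by Proposition \ref{Prop1}, in the uniform form established at the end of Section \ref{lbo} that $L_1(s)\sim\mu_1(s)$ for $s\ge t'$ — gives, for $t=\om T_f$,
\[
\Pr(t_f>\om T_f)=\E\Big[M_{\om T_f}\prod_{s<\om T_f}\!\big(1-\tfrac{L_1(s)}{N_1}\big)\Big]\ \le\ \exp\brac{-\brac{1-\tfrac1\om}\sum_{s=t'}^{\om T_f-1}\frac{\mu_1(s)}{N_1}}\,\E[M_{\om T_f}\mathbbm{1}_{\mathcal G}]+\Pr(\neg\mathcal G).
\]
By \eqref{muk}, $\mu_1(s)/N_1=(k+1)s^k/T_f^{k+1}$, so \eqref{tum} bounds the sum below by $\big((\om T_f)^{k+1}-(T_f/\om)^{k+1}\big)/T_f^{k+1}=\om^{k+1}-\om^{-(k+1)}$; with $\E[M_{\om T_f}\mathbbm{1}_{\mathcal G}]\le\E M_{\om T_f}=1$ and $\Pr(\neg\mathcal G)=o(1)$, this yields $\Pr(t_f>\om T_f)\le e^{-(1-o(1))\om^{k+1}}+o(1)=o(1)$. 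Together with the lower tail, $T_f/\om\le t_f\le\om T_f$ w.h.p.

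The genuinely delicate input is the uniform lower bound $L_1(s)\gtrsim\mu_1(s)$ over the whole window $[T_f/\om,\om T_f]$: this is what decorrelates ``the process is still alive'' from ``level $1$ is under-occupied'' and forces the survival probability at $\om T_f$ down to $e^{-(1-o(1))\om^{k+1}}$. It rests on the gap-and-concentration analysis of Sections \ref{StateL}--\ref{lbo}, in particular on the model-dependent fact that the level-$1$ concentration threshold satisfies $t_1(\om)\ll T_f/\om$ (which is where $k\le\sqrt{\log n/\log\log n}$ enters), together with the martingale bookkeeping needed to handle the dependence of $t_f$ on the trajectory of $L_1$; a rapidly growing $\om$ reduces to a slowly growing one since $\set{T_f/\om\le t_f\le\om T_f}$ is monotone in $\om$. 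The remaining ingredients — Stirling for (1) and the first-moment estimate on $\whL_0$ for the lower tail — are routine.
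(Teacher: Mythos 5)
Part (1) and the lower-tail bound $t_f\ge T_f/\om$ coincide with the paper's proof: Stirling for the first, Markov's inequality applied to $\E\whL_0(T_f/\om)\sim\om^{-(k+1)}$ for the second. Where you genuinely diverge is the upper tail. The paper's route is lighter: it proves concentration $L_1(T_f)\sim\mu_1(T_f)=\Theta\brac{n^{1/(k+1)}k^{-3/(2(k+1))}}$ at the \emph{single} time $T_f$, fixes the set $B_1(T_f)$, notes that each subsequent particle independently picks a vertex of $B_1(T_f)$ with probability $L_1(T_f)/N_1=\Theta(1/T_f)$ (and then halts at the source), and concludes by geometric waiting-time domination that $t_f\le T_f+O_{\Pr}(\om T_f)$. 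Your hazard-martingale packages the same mechanism more elaborately and extracts a sharper tail $\exp\set{-(1-o(1))\om^{k+1}}$, but it incurs two costs the paper avoids.

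First, $M_t=\mathbbm{1}\{t_f>t\}/\prod_{s<t}(1-L_1(s)/N_1)$ is not literally a martingale. If $\tau=\min\{t:L_1(t)=N_1\}$ is finite, then $M_\tau>0$ while $M_{\tau+1}$ is a $0/0$ indeterminate (particle $\tau+1$ surely halts at the source and the new factor in the denominator vanishes), so $\E[M_{\tau+1}\mid\mathcal{F}_\tau]=0\ne M_\tau$. The cure is to use the stopped process $M_{t\wedge\tau}$, which \emph{is} a nonnegative mean-one martingale, and absorb $\Pr(\tau\le\om T_f)=o(1)$ into the error using $\E\whL_1(\om T_f)=o(N_1)$, exactly the estimate you already cite. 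The parenthetical ``the denominator is positive on a w.h.p.\ event'' gestures at this but restricting to an event does not by itself preserve the martingale property — the stopping time is needed.

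Second, the event $\mathcal{G}=\{L_1(s)\ge(1-1/\om)\mu_1(s)\ \forall s\in[T_f/\om,\om T_f]\}$ is a uniform-in-$t$ statement, while the Hoeffding bounds of Section~\ref{HoConc} are per-$t$ with failure probability $O(je^{-\om})$; a naive union bound over the window of length $\Theta(\om T_f)$ would demand $\om\gg\log T_f\asymp\log n$, outside the intended ``$\om$ arbitrarily slow'' regime. (The fix — monotonicity of $L_1$ together with a union bound over a geometric grid of $O(\log\om)$ times, yielding $L_1(s)\gtrsim 2^{-(k+1)}\mu_1(s)$ uniformly — works, but the paper sidesteps it.) In fact you do not need uniformity at all: on the \emph{pointwise} event $\{L_1(T_f)\ge(1-1/\om)\mu_1(T_f)\}$, monotonicity alone gives
\[
\sum_{s=T_f}^{\om T_f-1}\frac{L_1(s)}{N_1}\ \ge\ (\om-1)T_f\,\frac{L_1(T_f)}{N_1}\ \sim\ (\om-1)(k+1)\ \rai,
\]
since $T_f\,\mu_1(T_f)/N_1=k+1$, and this already makes your final display $o(1)$. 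Once you only need a per-step hazard $\Omega(1/T_f)$ past $T_f$, the martingale bookkeeping and the uniform concentration input are both superfluous; they buy a quantitative rate $\exp\{-\Theta(\om^{k+1})\}$ which the statement does not require.
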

\begin{proof}
Let $t_1=t_1(0)$ be such that $\mu_0(t_1)\sim 1$, and thus $t_1\sim T_f$. Let $t'=t_1/\om'$, where $\om' \rai$ slowly. Then w.h.p.,
$\E \whL_0(t')=O(1/(\om')^{k+1})$, and thus $\Pr(\whL_0(t')>0)=O(1/\om')$.

We next investigate the concentration of $L_1(T_f)$. By Lemma \ref{Tfval}.(1),
\begin{equation}\label{atlast}
 \mu_1(T_f)=\mu_0(T_f) \frac{N_1 (k+2)}{T_f}\ge \frac{n}{\g_k n^{k/(k+1)}}=\Th(1) n^{1/(k+1)} \gg 4 \om^3,
\end{equation}
where $\om=6 \log n$. Thus $T_f \gg t_1(\om)$ and hence  Proposition \ref{conc-whp} holds for $\wtL_1(T_f)$.
Suppose  at $T_f$ that  $L_0(T_f)=0$. By \eqref{atlast} the expected waiting time $\t$ for a particle to hit $L_1(T_f)$ is
\[
\t\le (1-o(1)) \frac{N_1}{\mu_1(T_f)}=\Th(1)  \frac n{k\, n^{1/(k+1)}}=\Th(1) \frac{n^{k/(k+1)}}{k}=\Th(T_f/k)
=O(T_f)
\]
By  time $\om' T_f$, w.h.p. $L_0(\om'T_f)=1$,  completing the proof of Proposition \ref{endDLA}, and hence Theorem \ref{Th1}.(1).
\end{proof}

\subsection{Existence of a unique connecting path component.}\label{uniqp}
We now prove Theorem \ref{Th1}.(3) for the equal layers model. We must show w.h.p. that at $t_f$, the finish time, the path of
occupied vertices connecting the source to level $k$ (and hence to the sink) has no off-path neighbours in $C_{t_f}$.

At a given step $t$, the edge induced component $C_t$ is obtained from $C_{t-1}$ by adding a newly occupied vertex which points to the neighbour in $C_{t-1}$ which halted the particle:
Thus a particle  halts at  vertex $u$ in level $i$ if  it  chooses an edge $uw$ to an occupied neighbour $w$ in level $i+1$. We consider this edge $uw$ as being directed from $u$ to $w$ in the component $C_t$ rooted at the sink.
An arborescence is a rooted  tree with all edges directed towards the root vertex.
Thus $C_t$ is an arborescence with root $z$. On deletion of the  $z$, $D_t=C_t\sm\{z\}$ becomes a directed forest of arborescences each rooted at a vertex in level $k$.

Let $B_i$  be the set of occupied vertices in level $i$, where $L_i=|B_i|$. Given that a particle at  $u$ chooses a vertex  in $B_{i+1}$ as a neighbour, then this neighbour is chosen uniformly at random (u.a.r.)
from the set $B_{i+1}$.

We regard  vertices occupied by halted particles as coloured either red or blue, with all occupied vertices in level $k$ coloured blue.
If  $u$ is the first in-neighbour of $w$ then $u$ is coloured blue. If however $w$ already has an in-neighbour $u'$, then $u, u'$ and all other in-neighbours are (re-)coloured red. At any step, the red vertices in a level
are those with siblings, and the blue ones are the unique in-neighbour of some vertex in the next level.
The choice of $w$ by the particle at $u$ is independent of the colour of $w$ at this step.

The process halts  when there is a directed path of occupied vertices $v=u_0 u_1\cdots u_kz$ from the source to sink.
 The source vertex $v$ is blue at $t_f$ as it is the first in-neighbour of $u_1$.
\begin{lemma}\label{blue}
With high probability, the path $v=u_0 u_1\cdots u_k =w$ from the source to level $k$ is blue, and thus the arborescence of halted particles rooted at $w=u_k$ is exactly this path.
\end{lemma}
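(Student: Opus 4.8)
The plan is to show that the connecting path is blue by bounding, for each level $i$, the probability that the vertex $u_i$ on the path acquires a second in-neighbour at any point of the process, and then taking a union bound over $i=1,\dots,k$. Recall that $u_i$ is coloured red only if some particle halts at a sibling $u_i'$ of $u_i$, i.e. at a vertex $u_i' \in S_i$ that also chooses the edge to $u_{i+1}$; equivalently, two distinct halted particles in level $i$ point to the same vertex $u_{i+1}$ in level $i+1$. So the event ``$u_i$ is red'' is contained in the event that the set $B_{i+1}$ of occupied vertices of level $i+1$ contains a vertex with at least two in-neighbours among the $L_i(t_f)$ occupied vertices of level $i$.

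The key estimates come directly from the occupancy bounds already established. By Proposition~\ref{Prop1} (and the w.h.p.\ upper bounds from Lemma~\ref{LHAT} and its use in Sections~\ref{lbo}, \ref{Growing}) we have, w.h.p., $L_i(t) \le \ooi \mu_i(t)$ for all $t \le \om T_f$ and all $i$ with $\mu_i(t) \to \infty$, while $L_i(t_f) = O(\om\,\mu_i(T_f))$ in general. The heart of the argument is then a second-moment / birthday-type computation: conditioned on the history, each particle that halts in level $i$ chooses its out-edge to a uniformly random vertex of $S_{i+1}$, and $S_{i+1}$ has $N_{i+1}$ vertices, whereas only $L_i(t_f)$ particles ever halt in level $i$. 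The expected number of ``collisions'' — ordered pairs of halted level-$i$ particles pointing to the same level-$(i+1)$ vertex — is at most $L_i(t_f)^2 / N_{i+1}$. Crucially $u_{i+1}$ is one specific vertex, so $\Pr(u_i \text{ red}) \le \Pr(\text{two halted level-}i\text{ particles both point to } u_{i+1})$, and since each of the $L_i(t_f)$ relevant particles hits $u_{i+1}$ with probability $1/N_{i+1}$, this is at most $\binom{L_i(t_f)}{2}/N_{i+1}^2 \le L_i(t_f)^2/N_{i+1}^2$. Summing over $i$, the lemma follows provided
\[
\sum_{i=1}^{k} \frac{L_i(t_f)^2}{N_{i+1}^2} = o(1),
\]
and one plugs in $L_i(t_f) \le \ooi\,\om\,\mu_i(T_f)$ with $\mu_i$ from \eqref{muk}. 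In the equal layers model $\mu_{k-j}(T_f) = T_f^{j+1}/((j+1)!\,N_k^j)$ decreases geometrically in $j$ with ratio $\approx T_f/N_k = o(1)$, so the sum is dominated by the $j=0$ term $L_k(t_f)^2/N_{k+1}^2 = t_f^2$ — which is \emph{not} $o(1)$; this forces one to treat level $k$ separately (there $N_{k+1}=1$ and every occupied level-$k$ vertex points to the sink, but the sink is the unique occupied vertex of $S_{k+1}$ and is allowed to have many in-neighbours, so $u_k$ blue just means no particle halts at a \emph{sibling of} $u_k$ pointing to $z$ — this needs the separate observation that $u_k$ is coloured blue by definition of the colouring only if it was the first in-neighbour of $z$, which happens iff $u_k$ was occupied before any other level-$k$ vertex; more carefully, $u_k$ red requires another level-$k$ vertex, and we instead bound the probability the \emph{connecting path's} top vertex differs from the first-occupied level-$k$ vertex). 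For $i \le k-1$ the geometric decay makes $\sum_{i=1}^{k-1} L_i(t_f)^2/N_{i+1}^2$ dominated by its largest term, $L_{k-1}(t_f)^2/N_k^2 \approx (\om T_f^2/(2N_k))^2/N_k^2 = \om^2 T_f^4/(4 N_k^4)$, which is $o(1)$ since $T_f \ll N_k$; similarly in the growing layers / tree case one uses $N_{i+1} = d\,N_i$ and the computations of Section~\ref{Growing}.

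The main obstacle is handling level $k$ and, more generally, making precise the claim that the level-$i$ vertex \emph{on the connecting path} is the one at risk — because the connecting path is itself determined by the process, one cannot simply condition on ``$u_i$'' being a fixed vertex and then say its in-neighbours are independent uniform. The clean way around this is to reverse the logic: instead of fixing $u_i$ and bounding its in-degree, bound the total number of level-$i$ vertices that ever have in-degree $\ge 2$ (call these the red vertices), show this count is w.h.p.\ small relative to $L_i(t_f)$ — indeed the expected number of such vertices is $O(L_i(t_f)^2/N_{i+1})$ by the collision count above — and then argue that the connecting path, being built by a walk that halts at the \emph{last} unoccupied vertex, threads through blue vertices: at the final step the halting particle traverses $u_0 u_1 \cdots u_k$ and for each $i$ the edge $u_i u_{i+1}$ was the edge by which $u_i$ got occupied (since $u_i$ was unoccupied before this final particle and $u_{i+1}$ occupied), so $u_i$ has in-degree one from this path-edge at the moment it is created; $u_i$ becomes red only if \emph{later} another particle halts at a sibling pointing to $u_{i+1}$, i.e.\ only if $u_{i+1}$ ever acquires a second in-neighbour, which brings us back to the collision bound applied at level $i$ with target vertex $u_{i+1}$. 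Once this structural point is nailed down, the rest is the routine geometric-series estimate above, carried out separately in the equal-layers and growing-layers regimes using the occupancy asymptotics already proved.
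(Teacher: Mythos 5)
Your overall strategy converges on the paper's: count the red (in-degree-$\ge 2$-witnessing) vertices per level, compare that count to the number of occupied vertices, and exploit the fact that the path vertex $u_{i+1}$ is a \emph{random} occupied vertex. However, your central quantitative estimate is wrong. You assert that each halted level-$i$ particle points at a given level-$(i+1)$ vertex with probability $1/N_{i+1}$, so that a pair of halted particles collides with probability $1/N_{i+1}$ and the expected number of red vertices in level $i$ is $O(L_i(t_f)^2/N_{i+1})$. But a particle that \emph{halts} at level $i$ has, conditionally on halting, chosen a uniformly random \emph{occupied} vertex of $S_{i+1}$, so the per-pair collision probability is of order $1/L_{i+1}(s)$, not $1/N_{i+1}$ --- larger by the (possibly enormous) factor $N_{i+1}/L_{i+1}(s)$. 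The legitimate computation is unconditional: particle $s$ halts red at level $i$ with probability at most $|\mathrm{Out}(B_i(s-1))|/N_{i+1}\le L_i(s-1)/N_{i+1}$, and one must sum this over \emph{all} $t$ particles, giving $\E Z_i(t)\lesssim \sum_{s\le t}\mu_i(s)/N_{i+1}\sim 2\mu_{i-1}(t)$ (this is \eqref{ZZZ}). In the equal layers model at level $k-1$ this is $\Theta(t^3/N_k^2)$, whereas your $L_{k-1}^2/N_k=\Theta(t^4/N_k^3)$ is smaller by a factor $t/N_k=o(1)$: your bound undercounts collisions and is not a valid upper bound. (The conclusion survives because the correct, larger count is still $o(L_i)$, but your derivation does not establish it.) Relatedly, your worry about level $k$ is moot: the colouring declares all occupied level-$k$ vertices blue, and $z$ is permitted many in-neighbours.

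The second gap is structural. You correctly identify that one cannot treat $u_{i+1}$ as a fixed vertex, but your resolution misdescribes the dynamics: the final particle does not ``traverse $u_0u_1\cdots u_k$''; it halts at $u_0$ after one step, and the path was assembled backwards over many earlier steps, $u_i$ being occupied at some time $s_i$ with $s_1>s_2>\cdots>s_k$. What makes the argument work --- and what you never pin down --- is that the particle halting at $u_i$ chose $u_{i+1}$ uniformly at random from $B_{i+1}(s_i)$, so $\Pr(u_{i+1}\in R_{i+1}(s_i))=Z_{i+1}(s_i)/L_{i+1}(s_i)$, \emph{at time $s_i$}, not $t_f$; comparing $Z_{i+1}$ to $L_{i+1}(t_f)$ as you propose is a mismatch of times. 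One must then separately bound the probability that $u_{i+1}$ turns red \emph{after} $s_i$, which the paper does by noting this requires a later particle to select the specific vertex $u_{i+2}$, an event of probability at most $1/N_{i+2}$ per step, contributing $(t-s_i)/N_{i+2}$. Your final sentence gestures at this split but again invokes the flawed $L_i^2/N_{i+1}^2$ collision bound for it, when the ``later'' event, conditioned on $u_{i+1}$ already having the in-neighbour $u_i$, is a first-moment bound of order $t/N_{i+2}$, not a second-moment one. With these two repairs (the unconditional red-halting count summed over all $t$ particles, and the two-epoch treatment of each $u_{i+1}$ at its own halting time) your outline becomes the paper's proof.
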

\begin{proof}
As before, let $B_i$  be the set of occupied vertices in level $i$, and $L_i=|B_i|$. As each $u \in  B_i$ has a unique occupied out-neighbour in $C$, the subset {\rm Out}$(B_i)$ of $B_{i+1}$ with at least one in-neighbour has size at most $L_i$.

Let $\ind{k-j,s}$ be the indicator that particle $s$ halts in level $k-j$ and is coloured red due to a pre-existing sibling. In this case particle $s$ has chosen an out-neighbour in the existing set {\rm Out}$(B_{k-j}) \seq B_{k-j+1}$, and thus, as $|${\rm Out}$(B_{k-j})| \le \whL_{k-j}(s)$,
\[
\E \ind{k-j,s} \le \frac{\E \whL_{k-j}(s)}{(n/k)} \sim \frac{\mu_{k-j}(s)}{(n/k)}.
\]
Let $Z_{k-j}(t)$ be the number of red vertices in level $k-j$. We  associate the number of possibly-red
vertices at each step with a super-process $\wh Z_{k-j}(t)=Z_{k-j}(t)+Q_{k-j}(t)$, where  
$Q_{k-j}(t)$ is Bernoulii with parameter $(\whL_{k-j}-|(${\rm Out}$)B_{k-j}|)$.
Thus
$Z_{k-j}(t) \le \wh Z_{k-j}(t)$ and where
\[
\E(\wh Z_{k-j}(t+1) \mid \wh{\cal H}(t))=\wh Z_{k-j}(t)+\frac{\whL_{k-j}(t)}{N_{k-j}}.
\]
The recurrence  for $\E \wh Z$ mirrors the recurrence \eqref{whLi} for $\whL_{i}(t)$ with $k-j$ replacing $i+1$. Specifically,
\begin{align}
\E \wh Z_{k-j}(t) \sim & \;\frac 1{(n/k)}\sum_{s=1}^t{\mu_{k-j}(s)}= \frac{1}{(n/k)} \sum_{s=1}^t \frac{s^{j+1}}{(j+1)! (n/k)^j} \nonumber\\
 \sim &\;  \frac{t^{j+2}}{(j+2)! (n/k)^{j+1}}= \mu_{k-(j+1)}(t). \label{ZZZ}
\end{align}
We note that $\wh Z_{k-j}(t)$ is the sum of independent indicator variables, and will be concentrated at or after
$t_{k-(j+1)}(\om)$.
The number of red vertices is at most $2Z_{k-j}(t) \le 2 \wh Z_{k-j}(t)$, where
the factor 2 covers the case where the pre-existing  sibling was blue but is  recoloured red.

Assume $T_f/\om \le t_f \le \om  T_f$, where $\om=\om'$ from Proposition \ref{endDLA} tends slowly to infinity with $n$. Denote the path connecting $v$ to level $k$  by $v=u_0 u_1 u_2\cdots u_k$. By definition $v=u_0$ is blue.
For $i \ge 1$, let $R_i(s)$ be the red vertices in level $i$ at step $s$. Let $s_i$  denote the step at which $u_i$ became occupied.
Consider next the colour of $u_1$ at $t_f$.
 As  $T_f= t_1(0)$, and $t=t_f \ge T_f/\om\gg t_1(\om)$ by the gap property, so we have that $L_1(t) \sim \mu_1(t)$. On the other hand $\wh Z_1(t)$ may not be concentrated, but we can assume $\wh Z_1(t) \le \om' \mu_0(t)$ where $\om'$ is to be determined. If $t=2\om T_f$, then $\mu_0(t)=(2\om)^{k+1}$ and as $k\ge 2$ then $t \ge t_0(\om)$, so the red subprocess in level one is concentrated. Crudely\footnote{Why? The number of red vertices can only increase with $t$ and is at most $8 \om^3$ in expectation at $t_0(\om)$. The width of the 'end time interval' is $\om^2$. Apply the Markov inequality $\om^2$ times.}
put $\om'=\om^{5}$.
Vertex $u_1$ was chosen uniformly at random  from the occupied vertices in level one by the particle halting at $u_0$, so,
\begin{equation}\label{R1Tf}
\Pr( u_1 \in R_1(t)) \le  \frac{2\wh Z_1(t)}{L_1(t)} \le \frac{2 \om' \mu_0(t)}{\mu_1(t)} \le \frac{2 \om' t}{(k+1)(n/k)}
\le \frac{2 \om' t}{n}.
\end{equation}
Next consider the colour of $u_2$.
If $u_2$ is red at step $t$, then either  (i) it became red  before step $s_1$ when it was chosen uniformly at random by $u_1$, or (ii) it became red at some later step.

In the first case,
\[
\Pr(u_2 \in R_2(s_1))\le\frac{2\wh Z_2(s_1)}{L_2(s_1)} \le 2\om'\frac{\mu_1(s_1)}{\mu_2(s_1)}=\frac{2\om' s_1}{k(n/k)}=\frac{2 \om' s_1}{n}.
\]
By the gap theorem, $L_2$ is already concentrated at $s_1$, for otherwise $L_1(s_1)$ would be empty.
The  $\om'$ covers possible lack of concentration of $\wh Z_2$

In the second case, as $u_3$ is the chosen out-neighbour of $u_2$, then $u_2$ will turn red if some particle chooses $u_3$ after time $s_1$, and thus
\[
\Pr(u_2 \in R_2(t) \sm R_2(s_1)) \le \frac{1}{(n/k)} \sum_{\t=s_1+1}^t \E \ind{u_3 \text{ chosen at }\t}=\frac{t-s_1}{(n/k)}.
\]
Thus
\[
\Pr(u_2 \in R_2(t)) \le \frac{2\om' kt}{n},
\]
and similarly for $u_3,\ldots,u_{k-1}$. Thus
\[
\Pr(\text{path } vu_1\cdots u_k \text{ is blue}) \ge \prod_{i=1}^{k-1} \brac{1-\frac{2\om' kt}{n}}=1-O\bfrac{\om' k^2t}{n},
\]
where $t= t_f \le \om n^{k/(k+1)}$.
Provided $\om' k^2 \om/n^{1/(k+1)}=o(1)$, which holds for $k \le \sqrt{\log n}/\log \log n$, the path from the source to vertex $w=u_k$ level $k$ is blue, and thus the arborescence rooted at $w$ is exactly this path.
\end{proof}
%
%
%
%
%
\section{Analysis of DLA in the growing layers model} \label{Growing}
In the growing layers model, each layer is larger than the previous one by a factor of $d$. Thus $N_j=d^j$ for $j=0,1,...,k$ and we take $N_k=d^k=n$. Many of the properties of this model such as a gap property and unique connecting path are similar to the equal layers model.

The main, and most striking difference, is that there is a well defined level at a distance about $\sqrt{2k+2}$ from the end at which agglomerative growth stops and from which
and a single path grows back towards the source.
Moreover
at the end, {\em except for the last $...$
levels}, the connecting path is the {\em only occupied vertex} in the layer. 
This is in contrast to the equal layers model where  all levels have significantly occupancy, and even that of  the first level at the end is $\Th(n^{1/(k+1)})$. 

\begin{proposition}\label{endG-DLA}
Let $G$ be a growing layers graph with level sizes $N_i=d^i$, for $i=0,...,k$, where $d \rai,\; k \rai$,
and $k \ge \log^2 d$.
Let
\begin{equation}\label{GMTf}
T_f=  \sqrt{k}\; d^{k+3/2-\sqrt{2k+2}}.
\end{equation}
\begin{enumerate}[(1)]
\item  The finish time $t_f$ of DLA on $G$ satisfies  $ T_f/\om \le t_f \le \om T_f$, where $\om \rai$  slowly.
\item At $t_f$, levels $i=1,\ldots, k-\rdup{\sqrt{2k+2}-1}$ contain a single occupied vertex, the vertex $u_i$ of the the connecting path.
\end{enumerate}
\end{proposition}
\begin{proof}
The size $N_{i}$ of layer $i$ is  $N_i=d^{i}$. It follows that the product of the set sizes in
the denominator of $\mu_{k-j}(t)$ in \eqref{muk} is given by
\[
N_kN_{k-1}\cdots N_{k-j+1}= d^k d^{k-1}\cdots d^{k-j+1}= d^{kj-j(j-1)/2},
\]
and thus \eqref{muk} becomes
\begin{equation}\label{muk1}
\mu_{k-j}(t)=\frac{t^{j+1}}{(j+1)!\;d^k d^{k-1}\cdots d^{k-j+1}}= \frac{t^{j+1}}{(j+1)!\; d^{kj-j(j-1)/2}}.
\end{equation}
Assume $d$ is sufficiently large. The upper bound $\E \whL_{k-j}$ is obtained in Section \ref{Sec2}. However, a problem can arise in the growing layers model  in the upper bound calculations. The value of  $\mu_{k-j}$ can decrease with increasing $j$ and then (anomalously) increase again. This is  because the recurrence used to establish it assumes $\mu_{k-\ell} \rai$ for all $\ell <j$, which is not the case. We next locate where this happens; this is where the
unique path back to the source begins.

\paragraph{An important level.}
We next show the existence of a level $i\sim k+1-\sqrt{2k+2}$, such that the first occupancy of this level effectively determines the finish time of the process.

Let  $t$ be such that $\mu_{k-j}(t)= 1$, and let $t_1=t_1(k-j)$ be $\rdup{t}$, so that $\mu_{k-j}(t)\sim 1$ at step $t_1$.
From \eqref{muk1},
\begin{equation}\label{t1}
\mu_{k-j}(t_1)=\frac{t_1^{j+1}}{(j+1)!\; d^{kj-j(j-1)/2}}\sim 1 \quad \implies \quad t_1\sim [(j+1)!]^{1/(j+1)}\; d^{\frac{ 2kj-j(j-1)}{2(j+1)}}.
\end{equation}
From \eqref{muk}
\begin{equation}\label{cluck}
\frac{\mu_{k-j+1}(t)}{\mu_{k-j}(t)}=\frac{(j+1)d^{k-j+1}}{t},
\end{equation}
so setting $\mu_{k-j}(t_1(k-j))\sim  1$ gives
\begin{align}
\mu_{k-j+1}(t_1)\sim&\; \frac{j+1}{[(j+1)!]^{1/j+1}}\;d^{k-j+1-\frac{ 2kj-j(j-1)}{2(j+1)}}\nonumber\\
\sim &\;\frac{e}{(2\pi (j+1))^{1/2(j+1)}}\; d^{\frac{2k+2-j(j+1)}{2(j+1)}}.\label{muk-j+1}
\end{align}
The leading term on the RHS is  bounded, and the exponent of $d$ on the RHS is positive provided $2k+2 > j(j+1)$, which ensures that $\E \whL_{k-j+1}(t)$ is sufficiently large  close to $t_1(k-j)$.

What value of $k-j$ maximizes the step $t_1=t_1(k-j)$ at which  $\mu_{k-j}(t)\sim 1$?
Write the exponent of $d$ on the RHS of \eqref{t1} as $f(j)/2$ where
\[
f(j)=\frac{2kj-j(j-1)}{(j+1)}=(2k+2) -j -\frac{2k+2}{j+1}.
\]
 The maximum  of $f(j)$ occurs at $j^*$ when $(j^*+1)^2=(2k+2)$, giving $f(j^*)=(j^*)^2$.
The (not necessarily integer) values of $j^*$,  $k-j^*$ and $d^{f(j^*)/2}$  are
\begin{equation}\label{j*}
j^*=\sqrt{2k+2}-1, \qquad k-j^*=k+1-\sqrt{2k+2}, \qquad d^{f(j^*)/2}=d^{k+3/2-\sqrt{2k+2}}.
\end{equation}
In the case where $j^*$ is not integer, the
rounding error is addressed in the Appendix, where we show that the condition $k \ge \log^2 d$
given in Proposition \ref{endG-DLA} is sufficient
 to ignore the effect of rounding on the value of $T_f$.

Ignoring rounding effects, we evaluate $t_1=t_1(k-j)$ at $j=j^*$, where
$f(j)=j^2$, to find
\begin{align}
t_1  \sim&\; [(j+1)!]^{1/(j+1)}\; d^{f(j)/2}\nonumber\\
\sim&\;  e^{-1}(\sqrt{2\pi})^{1/(j+1)} \;(j+1)^{1+1/2(j+1)}\;\;d^{j^2/2}\nonumber\\
=&\;C_{k-j}\; \sqrt{2k+2}\;\; d^{k+3/2-\sqrt{2k+2}},\label{t1val}
\end{align}
on inserting the values from \eqref{j*}, and where $ C_{k-j}=e^{-1}(1+O(1/j))$.
Note that $t_1=\Th( T_f)$. From \eqref{cluck},
\[
\frac{\mu_{k-(j+1)}(t)}{\mu_{k-j}(t)}=\frac{t}{(j+2)d^{k-j}}
\]
so that at $t_1$,  for some $C,\,C'=\Th(1)$,
\begin{equation}\label{mukki}
\mu_{k-j^*}(t_1)\sim 1, \qquad \mu_{k-(j^*-1)}(t_1)=Cd^{1/2}, \qquad \mu_{k-(j^*+1)}(t_1)=C'd^{1/2}.
\end{equation}
At first this seems confusing, as  one might expect to have $ \mu_{k-(j^*-1)}(t_1)=o(1)$  by analogy with the equal layers model.  Assuming $d^{1/2} \rai$, level $k-j^*+1$ is the last level at which the condition $\mu_{k-j+1} \rai$  is valid  in the
 recurrence from $k-j+1$ to $k-j$; and is where the assumption in Proposition \ref{Prop1} breaks down.

\paragraph{Gap property of $\whL$ and a lower bound on $L$.}
Note that  from the definition of $t_1(k-j^*)$, at $t_1/\om$, we have $\mu_{k-j^*}(t_1/\om)=O(\om^{-(j^*+1)})$, where $j^* \sim \sqrt{2k} \rai$.
 Thus w.h.p. $\whL_{k-j^*}(t_1/\om)=o(1)$ in the upper process. Consequently all levels $i=k-\ell$, $\ell \ge j^*$,
have $\whL_i(t)=0$, w.h.p., for $t \le t_1/\om$.

In what follows we only consider indexes $k-\ell$ where $0 \le \ell \le k-j^*+1$.
By \eqref{j*} we have $k-j^*+1=k+2-\sqrt{2k+2}$.

We see from \eqref{mukki} that $\mu_{k-j^*+1}(t_1(k-j^*))=C'd^{1/2}$. Thus although $d \rai$ so that
$\whL_{k-j^*+1}$ will be concentrated around  $\mu_{k-(j^*+1)}(t_1)$, we cannot expect it to be as strong as in Lemma \ref{conc-whp}. Fortunately this will not matter as $k-j^*$ is the last level to which we apply the gap argument.
For $\ell \le j^*-1$ at $t_1(k-\ell)$, the value of $\whL_{k-\ell+1}$ obeys Lemma \ref{conc-whp}. In particular, it can be checked that $t_1(k-j^*+1)=\Th(1) \sqrt k d^{k+5/2-(3/2)\sqrt{2k+2}}$. Thus using \eqref{cluck}, we obtain $\mu_{k-j^*+2}(t_1(k-j^*+1)) =\Th(d^{(\sqrt{2k+2}+1)/2})$.

Turning to the lower bound $\wt L$ as given in \eqref{LLi}--\eqref{LLk} we need to prove that \eqref{small} holds.
The main task  is to find a value of $\b$ for the growing layers model which we can use in the
arguments given in  Section \ref{lbo} for the equal layers model. In what follows $\om\rai$ slowly. The value 
 from Lemma \ref{conc-whp} is denoted as $\om'=6 \log n$.

Using \eqref{GMTf}, \eqref{muk1}, and $N_{k-\ell}=d^{k-\ell}$,
for $\ell \ge 1$,
\begin{flalign*}
\frac{\mu_{k-\ell}}{N_{k-\ell}}=&\; \frac{t}{(k-\ell+1)d^{k-\ell}}\;\frac{\mu_{k-\ell+1}}{N_{k-\ell+1}}\\
\le&\; \frac{\mu_{k-\ell+1}}{N_{k-\ell+1}}\; \frac{\om \sqrt{k}}{(k+2-\sqrt{2k+2})} \;\frac{d^{k+3/2-\sqrt{2k+2}}}{d^{k+2-\sqrt{2k+2}}}\\
\le& \;\frac{\om (1+o(1))}{\sqrt{kd}}\; \frac{\mu_{k-\ell+1}}{N_{k-\ell+1}}\;
\le \; \bfrac{1}{\b}^\ell \frac{t}{N_k}.
\end{flalign*}
Choosing  $\b= (1+o(1)))\sqrt{kd}/\om$, \eqref{bound} becomes
\[
\sum_{\ell=j^*+1}^i \frac{L^*_{k-\ell}}{N_{k-\ell}} \le  \frac{O(\om'^3)}{N_{k-j^*}}+2 \sum_{\ell=j^*+1}^i
\bfrac{1}{\b}^\ell \frac{t}{N_k} =O \bfrac{t}{\b^i N_k}.
\]
It now follows from the proof in Section \ref{lbo} that for those $k-j^*+1 \le i \le k$, and $t \ge t_i(\om')$, then $\mu_i(t) \rai$  suitably fast. We hence obtain that $\E \wtL_i(t) \sim \mu_i(t) \sim \E \whL_i(t)$,
and so we have $L_{k-\ell}(t) \sim \whL_{k-\ell}(t)\sim \mu_{k-\ell}(t)$.

\subsection{The finish time  in the growing layers model.}\label{FGLM}
We now turn to the proof of \eqref{GMTf}.
We show that, w.h.p.,  a path (of occupied vertices) grows back to the source from the {\em first vertex to be occupied } in level $k-j^*$,
thus halting the process; and moreover this occurs {\em  before a second vertex becomes occupied} in level $k-j^*$.

Let $t_0$ be the first step at which $L_{k-j^*}(t) =1$, where w.h.p. $t_0 \ge t_1/\om$.
Then either $t_0 \le t_1$, or, as the probability
a particle halts at level $k-j^*$ is
\[
\phi=\frac{L_{k-(j^*-1)}(t_1)}{N_{k-(j^*-1)}} \sim\frac{C d^{1/2}}{d^{k-j^*+1}}=\frac{C}{d^{k+3/2-\sqrt{2k+2}}};
\]
the  probability this does not occur in a further $t_1$ steps is, see \eqref{t1val},
\[
(1-\f)^{t_1} \le e^{-t_1\f}= e^{-C'\sqrt{2k+2}}=o(1),
\]
where $C'\sim C/e$ and we assume $k \rai$.


Let $u$ be the vertex in level $k-j^*$ containing the unique  particle halted at $t_0$.
Construct a path back from $u$ to the source as follows. Wait  until a particle
halts at $w_{k-j^*-1}$ in level $k-j^*-1$ by choosing edge $w_{k-j^*-1}u$. The expected time for this is $d^{k-j^*}$. In a further expected time $d^{k-j^*-1}$, the path will extend backwards, as  a particle will halt in  level $k-j^*-2$ by choosing edge to $w_{k-j^*-1}$ etc. Thus in a further
\[
T=d^{k-j^*}+ d^{k-j^*-1}+ \cdots +d= d^{k-j^*} \bfrac{1-1/d^{k-j^*}}{1-1/d}= \Th(d^{k-j^*})=\Th(d^{k+1-\sqrt{2k+2}})
\]
expected steps there will be a path $vw_1\cdots w_{k-j^*-1}u$ of halted particles extending from the source $v$ to vertex $u$ thus stopping the DLA process (if it has not already halted). This path should be unique, as the expected time for it to branch backwards at any level $i$ is $d^i \gg d^{i-1}$ if $d \rai$.

We next give the proof of Proposition \ref{endG-DLA}.(2).
The expected number of steps needed to create another halted particle in level $k-j^*$ is
\[
\frac 1\f= \Th(d^{k+3/2-\sqrt{2k+2}})= \Th(T d^{1/2}).
\]
Whereas, w.h.p. on the assumption that $d \rai$,  in at most $(t_1 +T)\om$ steps the process has halted as claimed, 
before a second vertex can become occupied in level $k-j^*$.

\paragraph{Existence of a unique connecting path.}
Finally, we prove that the arborescence rooted at level $k$ containing the connecting path from source to sink, consists uniquely of that path. The proof is similar to  Lemma \ref{blue} for the equal layers model.  At $t_f$, w.h.p. there is a unique path from level $k-j^*$ to level zero, so that level $k-j^*+1$ plays the role of level one. By analogy with Section \ref{uniqp} equation \eqref{R1Tf} etc.,
\begin{equation}\nonumber
\Pr( u_{k-j^*+1} \in R_{k-j^*+1}(t)) = \frac{Z_{k-j^*+1}(t)}{L_{k-j^*+1}(t)} \le  \frac{\om}{\mu_{k-j^*+1}(t)} \le O\bfrac{\om^2}{d^{1/2}},
\end{equation}
where we used an earlier result that $\mu_{k-j^*+1}(t_1)=Cd^{1/2}$.

Thus as $t_f \le \om t_1(j^*)$, where $j^*=\sqrt{2k+2}-1$ and $t_1(k-j^*)$ is given by \eqref{t1val}
\[
\Pr( (v,u_k)\text{--path is blue}) \ge \brac{1-\frac{\om^2}{d^{1/2}}}\;\prod_{j=1}^{j^*-2} \brac{1-\frac{\om  t_f}{d^{k-j}}}=1-O\bfrac{\om^2}{d^{1/2}}-O\bfrac{\om \sqrt k}{d^{3/2}}.
\]
\end{proof}

\section{Theorem \ref{Th2}.(1): Trees with large branching factor}\label{555}

Let $G=G(k,d)$ be a labelled tree with branching factor $d$ and final level $k$, such that $d^k=n$. As before, the source of the particles is the unique vertex $v$ at level zero. An artificial sink vertex $z$
 (at level $k+1$) is  attached to the vertices at level $k$. 
To establish Theorem \ref{Th2}.(1), we need to prove w.h.p. that $T_f/\om \le t_f \le \om T_f$  where $\om\rai$ slowly with $n$.

We  borrow several ideas from the growing layers model, starting with $j^*$ and $T_f$ (see \eqref{j*} and  \eqref{GMTf}).
Let $j^*=\sqrt{2k+2}-1$ and let $T_f$ be given by
\begin{equation}\label{TfT}
T_f=  \sqrt{k}\; d^{k+3/2-\sqrt{2k+2}}.
\end{equation}

By a uniformity argument, the expected number of particles arriving at a given vertex in level $k-j^*$ by step $T_f$ is  $T_f/d^{(k-j^*)}=\sqrt{kd}$.
However, the expected number of particles arriving at  a vertex $w$ at level $k-j^*+1$ by step $T_f$ is
\begin{equation}\nonumber
\frac{T_f}{d^{k-j^*+1}}=  \sqrt{\frac{k}{d}}=o(1),
\end{equation}
provided $k \ll d$, which we assume to be true. In order for $w$ to be occupied,  the sub-tree rooted at $w$ must contain a path from $w$  to level $k$
consisting  of $j^*$ occupied vertices.
By  the Markov inequality the event that  $j^*=\Th(\sqrt k)$  particles have  arrived at $w$  by step $T_ f$, has probability $O(1/\sqrt d)$.
So there should be few vertices in level $k-j^*+1$ with a path  to level $k$ containing  $j^*$  halted particles.

Consider  $t$ particles  percolating downward 
from the root of an {\em infinite d-ary tree}. Particle $s$ starts at step $s$   and each particle transitions one edge at each step, so they never collide. Let $w$ be a vertex in level $\ell$ of this tree, where the root vertex is at level zero. Let $u$ be a vertex at level $\ell+j$ contained in the subtree rooted at $w$,  and $H(w,u)$ the unique path $w=w_0w_1...w_j=u$ from $w$ in level $\ell$ to $u$ in level $\ell+j$.
Let $1 \le s_0\le s_1 \le s_2 \le \cdots \le s_j \le t$ be  particle labels,  where particles $s_0$ and $s_1$ transition all edges of  $H$, particle $s_2$ transitions  edges  $w_0w_1...w_{j-1}$; in general $s_{i}$ transitions $w_0w_1...w_{j-i+1}$, and $s_{j}$ transitions $w_0w_1$. The probability of this is
\[
P(s_0,s_1,...,s_j)=\bfrac{1}{d^\ell}^{j+1} \frac 1{d^j} \frac 1{dd^2\cdots d^j}.
\]
The expected number of $j+1$ tuples of particles which transition as above is ${t \choose j+1}P(s_0,s_1,...,s_j)$.
There are $d^j$ vertices $u$ at level $j$ in the subtree of $w$, and $d^{\ell}$ vertices $w$ in level $\ell$ of the tree, so  the  number of sequences $Z_{\ell,j+1}(t)$ forming such a path between the levels  has expectation
\[
\E Z_{\ell,j+1}(t)=d^\ell d^j {t \choose j+1}P(s_0,s_1,...,s_j)={t \choose j+1} \frac{1}{d^{\ell+1} \cdots d^{\ell+j}}.
\]
For $j \ll t$, with $\ell=k-j$ and $\mu_{k-j}(t)$ as given by \eqref{muk1} for the growing layers model,
\begin{equation} \label{moomoo}
\E Z_{k-j,j+1}(t)\sim \frac{t^{j+1}}{(j+1)!} \frac{1}{d^{k-j+1} \cdots d^{k}}=\mu_{k-j}(t).
\end{equation}
Return now to the finite Cayley tree $G(k,d)$. For some vertex in level $k-j^*$ to be occupied at step $t$ there must be some sequence $(s_0,s_1,...,s_j)$ which satisfies the construction given above. Indeed $s_0$  halts in level $k$, causing $s_1$ to halt in level $k-1$ and so on, until $s_j$ halts in level $k-j$.

Let $j=j^*$ and $t=(1-\e)t_1(k-j^*)$ where $\e=\om/\sqrt{2 k+2}=\om/(j^*+1)$, and  $\om < \sqrt k$,
\begin{equation}\label{mutter}
\mu_{k-j^*}(t)={(1-\e)^{j^*+1}}\le e^{-\om}=o(1).
\end{equation}
 We conclude that at $t'$  no such sequence exists w.h.p. and 
levels $0,1,...,k-j^*$ are empty. By \eqref{t1val}, $t_1(k-j^*) =C T_f$, for some constant $C>0$, so the finish time $t_f \ge T_f/\om$.

From \eqref{cluck}, $\mu_{k-j+1}(t)=[(j+1)d^{k-j+1}/t] \cdot \mu_{k-j}(t)$, and so 
 \[
 \mu_{k-j^*+1}(t')=\frac{\sqrt{2k+2}\,d^{k-j^*+1}}{(1-\e)CT_f} \mu_{k-j^*}(t_1)= \Th(\sqrt d).
 \]
Returning briefly to the infinite $d$-regular tree process, let $Z'(t)$ be the number of vertices $w$ in level $k-j^*+1$ with $j^*$ particles following a path $w=w_1w_2...w_j=u$ in the subtree rooted at $w$, plus another particle which passes through $w$, to any  of its children. Then
\[
\E Z'(t) \le \frac{t}{d^{k-j^*+1}} \E Z_{k-j^*+1,j^*}(t) = \frac{t}{d^{k-j^*+1}}\mu_{k-j^*+1}(t).
\]
If $t= t_1(k-j^*)$,  then for $k \ll d$
\[
\E Z'(t_1)= \Th( \sqrt{k}) = o(\mu_{k-j^*+1}(t_1)).
\]
Thus in expectation there are $(1-o(1))  \mu_{k-j^*+1}(t_1)$ vertices $w$ in level $k-j^*+1$ are {\em exact}.
They have occupancy $j^*$,
in their subtree and a unique occupied path to level $k$. The events that two such vertices $w,w'$ have this path property are independent and we conclude that the number of such vertices is concentrated around its mean at $\Th(\sqrt d)$.

From now on the proof mirrors that of the growing layers model in Section \ref{FGLM}. Thus w.h.p. within a most
$2 t_1$ steps 
 the first  occupancy of a vertex in level $k-j^*$ has occured; and a  path grows back to the source from this vertex, halting the process  before the second occupancy in level $k-j^*$ can occur.

 \subsection{Theorem \ref{Th2}.(2):  DLA on trees with branching factor $d \ge 2$}
\begin{proposition}
For $d \ge 2$, let $G=G(k,d)$ be the Cayley
tree with branching factor
$d$ and height $k$, where  $d^k=n$. Let $T=T(G)$ be given by
\[
T=  \sqrt{k} d^{k+3/2-\sqrt{2k+2}}.
\]
Let $t_f$ be the finish time of DLA on $G$.
Then w.h.p.  $T/\om \le t_f \le \om T$.
\end{proposition}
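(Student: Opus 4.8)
The plan is to rerun the tree argument of Section~\ref{555}, but \emph{without} the hypothesis $k\ll d$, so that the Poisson thinning factor $e^{-t/d^{k-j}}$ appearing in \eqref{Y:exp} can no longer be absorbed into $1+o(1)$ and must be carried through. Write $\nu_{k-j}(t):=\mu_{k-j}(t)\,e^{-t/d^{k-j}}$; by \eqref{X2}--\eqref{Y:exp} this is, up to a factor $1-O(j^2/t)$, the expected number of exact paths from level $k-j$ down to level $k$ at step $t$, conditional on levels $1,\dots,k-j$ being empty. The first step is to locate the operative level. As a function of $t$, $\nu_{k-j}(t)=(t/t_1(k-j))^{j+1}e^{-t/d^{k-j}}$ is maximised at $t^\star_j=(j+1)d^{k-j}$; using $[(j+1)!]^{1/(j+1)}\sim(j+1)/e$ and a second-order expansion of $f$ about its maximum $j^*$, which gives $\tfrac12 f(j)-(k-j)=\tfrac12-(j^*-j)+o(1)$ for $j^*-j=O(1)$ (recall $f(j^*)=(j^*)^2$), one finds $\nu_{k-j}(t^\star_j)=\Theta(1)\cdot d^{(j^*-j-1/2)(j+1)}$. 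Thus $\max_t\nu_{k-j}(t)$ changes by a factor $d^{\Theta(\sqrt k)}$ as $j$ moves by $1$ near $j^*$. Fix $j^\dagger:=\lfloor j^*-1\rfloor$, so that $s:=j^*-j^\dagger\in[1,2)$ and $\max_t\nu_{k-j^\dagger}(t)\ge\Theta(1)\,d^{(j^\dagger+1)/2}\to\infty$; then $t_1(k-j^\dagger)=\Theta(T)$, $d^{k-j^\dagger}=\Theta(d^{k-j^*})$, and the maximiser $t^\dagger:=t^\star_{j^\dagger}=(j^\dagger+1)d^{k-j^\dagger}=\Theta(T)$, the implied constant being a bounded power of $d$.

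For the lower bound $t_f\ge T/\om$ I would use only the crude upper bound of Lemma~\ref{LHAT}, which applies on the tree as well (the level-$(i+1)$ vertex of a fresh root-to-leaf walk is uniform on $S_{i+1}$, which is all that the construction of Section~\ref{secupper} uses). Occupation of a level-$\ell$ vertex forces occupation of one of its children, so if level $k-j^\dagger$ is empty at step $t$ then so are levels $0,1,\dots,k-j^\dagger$, and in particular the source is unoccupied. Since $T/t_1(k-j^\dagger)=\Theta(1)$ is bounded,
\[
\E L_{k-j^\dagger}(T/\om)\le\E\whL_{k-j^\dagger}(T/\om)\le\mu_{k-j^\dagger}(T/\om)=\bigl(T/(\om\,t_1(k-j^\dagger))\bigr)^{\,j^\dagger+1}=(\Theta(1)/\om)^{\Theta(\sqrt k)}\longrightarrow0 ,
\]
so $\Pr(L_{k-j^\dagger}(T/\om)\ge1)=o(1)$ by Markov, whence $\Pr(t_f\ge T/\om)=1-o(1)$.

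For the upper bound I would run the process to step $t^\dagger=\Theta(T)$ and show that w.h.p.\ level $k-j^\dagger$ is then occupied. By the choice of $j^\dagger$, $\E Y(k-j^\dagger,j^\dagger+1,t^\dagger)\sim\nu_{k-j^\dagger}(t^\dagger)\to\infty$; the exact-path indicators at distinct $w\in S_{k-j^\dagger}$ are independent given their arrival counts, exactly as in \eqref{VX}, so $\V Y=(1+o(1))\E Y$ and Chebyshev gives $Y\ge1$ w.h.p., while the mass of non-exact configurations at that level ($\ge j^\dagger+2$ arrivals at some $w$) is $o(\E Y)$, as in the final display of the ``existence of exact paths'' discussion. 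The event that levels $1,\dots,k-j^\dagger$ stay empty persists, as in Section~\ref{555}, exactly until a vertex of level $k-j^\dagger$ becomes occupied, so this happens by step $t^\dagger$ w.h.p. Once it does, the extension argument of Section~\ref{Growing} (build the occupied path backwards one level at a time, the expected wait at level $i$ being $\Theta(d^{i})$, so dominated by the top level) shows the occupied region reaches the source in a further $O(d^{k-j^\dagger})=O(T/\sqrt k)$ expected steps, hence within $\om T/\sqrt k$ steps w.h.p. Combining the two phases and absorbing the bounded power of $d$ hidden in $t^\dagger/T$ --- from the integer choice of $j^\dagger$ and, when $j^*\notin\Z$, from the rounding of $j^*$ --- into the stated slack yields $t_f\le\om\,d^{1/2}T$.

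The main obstacle is the absence of any slack in the time window and of a priori concentration at the operative scale. The controlling exponents --- $j^\dagger+1$, the thinning rate $t^\dagger/d^{k-j^\dagger}$, and $\tfrac12 f(j)-(k-j)$ --- are all of order $\sqrt k$, so a multiplicative error $1+\varepsilon$ in $t^\dagger/t_1(k-j^\dagger)$ or in the location of $j^\dagger$ is amplified to a factor $e^{\pm\Theta(\varepsilon\sqrt k)}$ in the exact-path count; moreover, because $e^{-t/d^{k-j}}$ is decreasing, running the process past $t^\star_j$ only drives $\nu_{k-j}(t)$ downward rather than to infinity, so concentration cannot be bought by waiting longer. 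Pinning $j^\dagger$ and $t^\dagger$ precisely enough that these $\sqrt k$-sized exponents cancel, leaving only the advertised $d^{1/2}$- and $d^{O(1/\sqrt k)}$-sized errors, is the crux; the rest is a transcription of the estimates of Section~\ref{555}.
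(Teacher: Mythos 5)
Your proposal is correct in substance and follows the same overall blueprint as the paper (a first-moment bound at the critical level for the lower bound; an exact-path count plus a backward-extension phase for the upper bound), but both halves make genuinely different technical choices. For the lower bound the paper re-derives a first-moment estimate directly on the tree, summing the binomial probabilities $p_t(w,\ell)$ over all $\ell\ge j^*+1$ at level $k-j^*$ and time $t_1/\om$, with a monotonicity argument to handle the conditioning on open paths; you instead port the upper-blocked process of Section~\ref{secupper} to the tree and apply Markov to $\E\whL_{k-j^\dagger}(T/\om)\le\mu_{k-j^\dagger}(T/\om)$. That transfer is legitimate --- the recurrence \eqref{whLi} and the coupling $B_i\subseteq\wh B_i$ use only that a fresh root-to-leaf walk is uniform on each level --- and is arguably cleaner; note only that $T/t_1(k-j^\dagger)\to e/\sqrt2>1$, so the bound $(\Theta(1)/\om)^{j^\dagger+1}\to0$ genuinely needs both $\om\rai$ and $j^\dagger\rai$. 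For the upper bound the paper descends $h=\rdup{\tfrac12+\tfrac12\log_dk}$ levels below $k-j^*$ and works at time $T$, where $\mu_{k-j^*+h}(T)=\Theta(1)^hd^{h^2/2}\rai$ robustly; you stay at $k-j^\dagger=k-\rdown{j^*-1}$ but move the time to $t^\dagger=(j^\dagger+1)d^{k-j^\dagger}$, the maximiser of the thinned count $\nu$. Both routes land on a bound of the form $\om\, d^{O(1)}T$, which matches the stated $\om d^{1/2}T$ only because bounded powers of $d$ are $\Theta(1)$ for constant $d$; the paper's own arithmetic carries the same slack (the ceiling in $h$ already costs up to a factor $d$), so this is not a defect specific to your argument.

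Two side claims in your write-up are wrong, though neither is load-bearing. First, at $t^\dagger$ the mean number of arrivals per vertex of level $k-j^\dagger$ is exactly $j^\dagger+1$, so a constant fraction of the $d^{k-j^\dagger}$ vertices at that level receive at least $j^\dagger+2$ particles; the number of non-exact configurations is therefore vastly larger than $\E Y=d^{\Theta(\sqrt k)}$, not $o(\E Y)$. The paper's corresponding display in Section~\ref{555} relies on $t/d^{k-j}=O(\om\sqrt{k/d})=o(1)$, which fails both for constant $d$ and at your $t^\dagger$, where $t^\dagger/d^{k-j^\dagger}=j^\dagger+1\rai$. Fortunately you only need $Y\ge1$ to conclude that level $k-j^\dagger$ is occupied, and non-exact occupations only help, so the false claim can simply be deleted. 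Second, \eqref{VX} as stated also uses $k\ll d$ (via $X\le t_f/\ell$); what you actually need, and what does hold at your scale, is $\V Y/(\E Y)^2=1/\E Y+O(\ell/d^{k-j^\dagger})=o(1)$, read off directly from $\E Y(Y-1)=(1+O(\ell/d^{k-j^\dagger}))(\E Y)^2$ together with $\E Y\rai$.
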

\begin{proof}
The argument leading to \eqref{mutter} in the previous section only assumed $k \rai$ so that we could find some $\om<\sqrt k$ and where $\om \rai$. As $d^k=n$ then $k=\log n/(\log d)$ which is is monotone increasing with deceasing $d$. So $k \rai$ as before and the lower bound on $t_f$ follows from \eqref{mutter} on choosing $t'=(1-\e)t_1(k-j^*)$, and noting that  $t' \approx (1-\e) T/e > T/\om$.

For the upper bound, let $j=j^*-h$ where $h=\rdup{1/2+\log_d k}$. In Section \ref{555}, for the upper bound, we assumed that $k\ll d$, in which case $h=1$, which led to an argument about the occupancy of levels $k-j^*$ and $k-j^*+1$.
In this section the assumption $k \ll d$ may no longer hold,  for example, when $d=2$, $k=\log n/\log 2$.

The value of  $\mu_{k-j*}$  given by \eqref{moomoo}, satisfies $\mu_{k-j+1}(t)=[(j+1)d^{k-j+1}/t] \cdot \mu_{k-j}(t)$.
Iterating this, and evaluating at  $t_1(k-j^*)=(1+O(1/j))e^{-1}\sqrt{2k+2}d^{k-j^*+1/2}$ from \eqref{t1val},
\[
\frac{\mu_{k-j^*+h}(t)}{\mu_{k-j^*}(t)}=\frac{(j+1)\cdots(j+h)}{t^h}d^{k-j+1}\cdots d^{k-j+h}=\Th(1)\, e^{h}d^{h^2/2}.
\]
 This ratio is $\om(1)$,  as either (i) $d \rai$, or (ii) $d$ is constant and then $\log_d k=\log_d \log_d n$, implying that $h=\om(1)$.
As before, with $\E Z_{k-j^*+h,j^*-h+1}(t)=\mu_{k-j^*+h}(t)$, consider $Z'(t_1)$, where
\[
\E Z'(t_1(k-j^*)) \le \frac{t_1}{d^{k-j^*+h} }\mu_{k-j^*+h}(t_1).
\]
The ratio $\r=t_1(k-j^*)/d^{k-j^*+h}$ satisfies
\[
\r=t_1(k-j^*)/d^{k-j^*+h}=\Th(1) \frac{\sqrt{k}}{d^{h+1/2}}=\bfrac{k}{d^{2h+1}}^{1/2}.
\]
Assume $k \le d$, then $2h+1 \ge 3$, so $\r \le 1/d$. Alternatively, if $d \le k$, then $h\ge \log_d k$ and
$\r\le 1/k$. In either case  w.h.p. $(1-o(1))\mu_{k-j^*+h}(t)$ vertices in level $k-j^*+h$ are exact at around $t=t_1$.

Either some path of halted particles already extends to a level $i$ where $i<k-j^*+h$, or all vertices in these levels are unoccupied at $t_1$. 
In the latter case,  
w.h.p. there exists $\Th(e^hd^{h^2/2})$ exact paths from level $k-j^*+h$ to level $k$. 
In expectation, it takes at most
\[
\Th(1) \frac{d^{k-j^*+h}}{e^hd^{h^2/2}}\le T
\]
further  steps for one of these paths to extend  back to the source. The upper bound now follows from the Markov inequality.
\end{proof}

\paragraph{Proof of Theorem \ref{Th2}.(2).} If $d$ is constant, $d^k=n$ implies $k=\log n/\log d$. Hence  $\sqrt{2k+2}= \sqrt{2k}+ \Th(1)$, and $d^{3/2}=\Th(1)$. Thus  $t_1$, $T$ are both $\Th( T_f)$, where $T_f$ is as given.

 \vspace{0.1in}

\section*{Appendix}
\paragraph{The effect of rounding error in the growing layers model.}


We examine conditions on $d,k$ which allow us to effectively ignore the rounding error on $j^*$ in the value of $T_f$,
and show that $(\log d)/\sqrt k=O(1)$ suffices.
In the case that $j^*$ is not  integer, we require the maximum $j$ such that $2k+2 > j(j+1)$; see the exponent of $d$ in \eqref{muk-j+1}.
Clearly $j=\rdown{j^*}$ satisfies $2k+2 > j(j+1)$, but what about $j=\rdup{j^*}$? Put $j=j^*+\e$. Further analysis, not given here, shows that the condition $2k+2 \ge j(j+1)$, is  satisfied by $j=\rdup{j^*}$ up to some $\e \in (1/2,1)$.

Let $j=\max\{i: (2k+2) \ge i(i+1)\}$ and suppose that $j=\rdown{j^*}$ so that $j^*=j+\e$.
Let $T_M=\Th(t_1(k-j))$ be given by
\[
T_M=  \sqrt{k} d^{\frac 12 (2k+2 -j- (2k+2)/(j+1))},
\]
be a revised estimate of the order of the halting time, where $T_M \le T_f$ as $j^*$
maximizes $T_f$. As $j^*=\sqrt{2k+2}-1$,  $T_f=\sqrt{k}d^{({j^*}^2/2)}$, see \eqref{j*},  and $2k+2-j^*-(j^*)^2=\sqrt{2k+2}$,
\begin{align*}
\frac{T_M}{T_f}= & d^{\frac 12 \brac{ (2k+2)-j^*+\e -\frac{2k+2}{j^*+1-\e}-(j^*)^2\,}} \\
=& d^{\frac 12 \brac{\sqrt{2k+2}-\frac{\sqrt{2k+2}}{1-\e/(\sqrt{2k+2})} +\e}}\\
=& d^{-\frac{\e^2}{2\sqrt{2k+2}}(1+O(1/\sqrt{k}))}.
\end{align*}
Choosing  $j=\rdup{j^*}=j^*+\e$ gives the same result.
Thus the effect of rounding is to alter $T_f$ by $\Th(1)d^{-O(1/\sqrt{k})}$. Thus $(\log d)/\sqrt k=O(1)$ suffices.

\end{document}